\newtheorem{theorem}{Theorem}[section]
\newtheorem{lemma}[theorem]{Lemma}
\newtheorem{proposition}[theorem]{Proposition}
\newtheorem{example}[theorem]{Example}
\newtheorem{assumption}[theorem]{Assumption}
\numberwithin{equation}{section}
\newcommand{\dif}{\mathrm{d}}
\newcommand{\E}{\mathbf{E}}
\newcommand{\R}{\mathbf{R}}
\newcommand{\C}{\mathbf{C}}
\newcommand{\Z}{\mathbf{Z}}
\newcommand{\N}{\mathbf{N}}
\newcommand{\ii}{\mathrm{i}}
\newcommand{\ee}{\mathrm{e}}
\title[Spectral independence of almost fully correlated random matrices]{Spectral independence of almost fully correlated random matrices}
\date{\today}
\begin{document}

\maketitle

\vspace{0.25cm}

\renewcommand{\thefootnote}{\fnsymbol{footnote}}

\noindent
\mbox{}%
\hfill%
\begin{minipage}{0.21\textwidth}
	\centering
	{Oleksii Kolupaiev}\footnotemark[1]\\
	\footnotesize{\textit{Oleksii.Kolupaiev@ist.ac.at}}
\end{minipage}
\hfill%
\mbox{}%
\footnotetext[1]{Institute of Science and Technology Austria, Am Campus 1, 3400 Klosterneuburg, Austria. 
}
\footnotetext[1]{Supported by the ERC Advanced Grant ``RMTBeyond'' No.~101020331.}

\renewcommand*{\thefootnote}{\arabic{footnote}}
\vspace{0.25cm}

\begin{abstract} 
We study the joint spectral properties of two coupled random matrices $H^{(1)}$ and $H^{(2)}$, which are either real symmetric or complex Hermitian. The entries of these matrices exhibit polynomially decaying correlations, both within each matrix and between them. Surprisingly, we find that under extremely weak decorrelation condition, permitting $H^{(1)}$ and $H^{(2)}$ to be almost fully correlated, the fluctuations of their individual eigenvalues in the bulk of the spectrum are still asymptotically independent. Furthermore, we demonstrate that this decorrelation condition is optimal.
\end{abstract}
\vspace{0.15cm}

\footnotesize \textit{Keywords:} Eigenvalue perturbation theory, Wigner-type matrix, correlated random matrix, local law.

\footnotesize \textit{2020 Mathematics Subject Classification:} 60B20, 82C10.
\vspace{0.25cm}
\normalsize

\section{Introduction}

Let $H$ be an $N\times N$ real symmetric or complex Hermitian matrix with random entries. The asymptotic behavior of the eigenvalues of $H$ as $N\to\infty$ has been extensively studied over the past several decades under increasingly general conditions on $H$. However, much less is known about the joint eigenvalue distribution of two random matrices that are correlated. This question has been primarily studied in the context of (invariant) \emph{multi-matrix models}, where the joint distribution of several matrices is given in a closed form; see for example \cite{Eynard97, Eynard98, Filev13, multi_universality}. In particular, Figalli and Guionnet show in \cite{multi_universality} that for a general multi-matrix model, the joint local eigenvalue statistics exhibits universal behavior when the coupling parameter is sufficiently small, meaning that the eigenvalues locally behave as if they were the eigenvalues of independent GUE matrices. Joint properties of multiple random matrices have also been studied in the context of multi-time correlation functions of the \emph{Dyson Brownian motion} (DBM) \cite{fixed_E} and Unitary Brownian motion \cite{LQG}. Moreover, random matrices that are correlated arise naturally in the setting of \emph{minor processes}, where the random matrices represent the nested principal minors of a common ambient random matrix \cite{determinantal_corr, Gor15, Gor_dbm, minor_process, LFL_Wigner}. 

Apart from \cite{minor_process, LFL_Wigner}, these results rely on the explicit formulas for the joint probability distribution of the eigenvalues of multiple matrices. In this work we study a Wigner-like analogue of a two-matrix model, where instead of assuming a closed form of the joint distribution, we impose general conditions on the correlation structure of the matrix entries. Inspired by \cite{multi_universality}, we move towards understanding the joint spectral properties of a correlated random matrix pair by addressing the following natural question.
\vspace{2mm}

\noindent \textbf{Q.:} How strongly can a pair of random matrices be correlated, while still preserving the asymptotic independence of their local spectral statistics (i.e, of the fluctuations of individual eigenvalues)?
\vspace{2mm}

\noindent We answer this question in Theorem \ref{theo:main} for a broad class of matrix pairs, including correlations between the matrices as well as between the entries of each of them. To illustrate this result, we first consider the following simple setting. Let 
\begin{equation}
W^{(1)}=\big(w_{ab}^{(1)}\big)_{a,b=1}^N\quad\text{and}\quad W^{(2)}=\big(w_{ab}^{(2)}\big)_{a,b=1}^N
\label{eq:intro_W}
\end{equation}
be $N\times N$ random matrices of the same symmetry type. Assume that $W^{(1)}$ and $W^{(2)}$ are \emph{Wigner-type} matrices, meaning that their entries are centered and independent up to the symmetry constraint. Furthermore, suppose that the correlations between $w_{ab}^{(1)}$ and $w_{cd}^{(2)}$ may occur only if $\{a,b\}=\{c,d\}$, and that for any indices $a$ and $b$ we have
\begin{equation}
\big|\E w_{ab}^{(1)}w_{ab}^{(2)}\big| \le (1-\alpha) \left(\E \big|w_{ab}^{(1)}\big|^2\right)^{1/2} \left(\E \big|w_{ab}^{(2)}\big|^2\right)^{1/2},\quad \forall\, 1\le a,b\le N,
\label{eq:intro_decorr}
\end{equation}
for some $\alpha\in [0,1]$. For $j=1,2$ we take an $N\times N$ deterministic matrix $A^{(j)}$ of the same symmetry type as $W^{(j)}$, and set $H^{(j)}:=A^{(j)}+W^{(j)}$. Additionally, fix a point $E^{(j)}$ in the bulk of the spectrum of $H^{(j)}$. We assume that $H^{(1)}$ and $H^{(2)}$ are at least slightly decorrelated, specifically, that \eqref{eq:intro_decorr} holds for some $\alpha\gg N^{-1}$. Under this remarkably weak assumption we prove that the fluctuations of several eigenvalues of $H^{(1)}$ closest to $E^{(1)}$ are asymptotically independent from the fluctuations of several eigenvalues of $H^{(2)}$ closest to $E^{(2)}$. More precisely, we show that the joint eigenvalue correlation functions of these matrices, rescaled according to the microscopic regime, factorize into the product of  the individual correlation functions of $H^{(1)}$ and $H^{(2)}$, up to a small error term. This result is somewhat surprising: the constraint $\alpha\gg N^{-1}$ allows $H^{(1)}$ and $H^{(2)}$ to be nearly identical, while their local eigenvalue statistics are still independent. The threshold $\alpha=N^{-1}$ is sharp: for $\alpha\ll N^{-1}$, there exist examples of matrix pairs satisfying \eqref{eq:intro_decorr} with strongly coupled eigenvalues. We discuss this optimality in more detail later in Example~\ref{ex:optimal}.

Our analysis of the joint eigenvalue statistics extends beyond the basic Wigner-type model presented in the previous paragraph. To capture more intricate correlation structures, we apply a \emph{linear filtering} to the matrices $W^{(1)}$ and $W^{(2)}$ defined in \eqref{eq:intro_W}. This involves applying a pair of deterministic linear transformations to these matrices, resulting in $H^{(1)}$ and $H^{(2)}$. This procedure introduces non-trivial correlations between the entries both within and between the matrices. Assuming that \eqref{eq:intro_decorr} holds for $W^{(j)}$, $j=1,2$, we prove in Theorem \ref{theo:main} that the same conclusion as above applies to this more general setting: as long as $\alpha\gg N^{-1}$, the fluctuations of individual eigenvalues in the bulk of $H^{(1)}$ and $H^{(2)}$ are almost independent.

The smallness of the threshold $\alpha=N^{-1}$, which separates the asymptotic independence ($\alpha\gg N^{-1}$) and the strong coupling ($\alpha\ll N^{-1}$) of eigenvalue fluctuations arises from the \emph{locality} of eigenvalue statistics we consider. By focusing only on finitely many eigenvalues as $N$ goes to infinity, we restrict ourselves to the \emph{microscopic regime}. This contrasts with the \emph{mesoscopic} and \emph{macroscopic regimes}, which concern the collective behavior of $N^\varepsilon$ and $\varepsilon N$ eigenvalues, respectively, for some fixed $\varepsilon>0$. In the macroscopic regime, the answer to the question (\textbf{Q}) is different and in some sense more natural: the global eigenvalue statistics of $H^{(1)}$ and $H^{(2)}$ are asymptotically independent only when $|1-\alpha|\ll 1$, indicating that these matrices are nearly independent. For instance, construct $(H^{(1)},H^{(2)})$ saturating \eqref{eq:intro_decorr} for some $\alpha\in [0,1]$ by
\begin{equation}
H^{(j)}=\sqrt{1-\alpha}W_0 + \sqrt{\alpha}W_j,\quad j=1,2,
\label{eq:intro_ex}
\end{equation}
where $W_0$, $W_1$ and $W_2$ are independent GOE matrices. Let $\{ \lambda_n^{(j)}\}_{n=1}^N$ denote the eigenvalues of $H^{(j)}$ listed in increasing order. As a very simple example, consider the global eigenvalue statistics
\begin{equation*}
\xi_j:=\mathrm{Tr}\left[\big(H^{(j)}\big)^2\right] = \sum_{n=1}^N \big(\lambda_n^{(j)})^2,\quad j=1,2.
\end{equation*}
A straightforward calculation shows that the correlation between the random variables $\xi_1$ and $\xi_2$ equals to
\begin{equation*}
\mathrm{corr}(\xi_1,\xi_2):=\frac{\E [\xi_1\xi_2]-\E[\xi_1]\E[\xi_2]}{\sqrt{\E [\xi_1^2]-\E[\xi_1]^2}\sqrt{\E [\xi_2^2]-\E[\xi_2]^2}}=(1-\alpha)^2.
\end{equation*}
In particular, $\xi_1$ and $\xi_2$ are decorrelated only when $|1-\alpha|\ll 1$.

Now we compare our result with the one obtained in \cite{multi_universality}, where the correlation strength between the matrices is governed by the coupling parameter $a$. The simplest case which fits the invariant set-up of \cite{multi_universality}, is the special Gaussian example where the joint density function of the complex Hermitian matrices $H^{(1)}$ and $H^{(2)}$ is proportional to
\begin{equation}
\mathrm{exp}\left\lbrace -\frac{N}{2}\mathrm{Tr} \left(a \big(H^{(1)}-H^{(2)}\big)^2 + \big(H^{(1)}\big)^2+\big(H^{(2)}\big)^2\right)\right\rbrace.
\label{eq:intro_integr}
\end{equation}
It has been shown in \cite{multi_universality} that for some $N$-independent constant $a_0>0$, the joint local eigenvalue correlation functions of $H^{(1)}$ and $H^{(2)}$ asymptotically match those of the independent GUE matrices once $\vert a\vert\le a_0$. Remarkably, joint correlations of up to $N^{2/3}$ bulk eigenvalues can be simultaneously monitored. Additionally assuming that $a$ is positive, the matrices $H^{(1)}$ and $H^{(2)}$ saturate the decorrelation inequality \eqref{eq:intro_decorr} with the control parameter $\alpha\sim (1+a)^{-1}$. In contrast to \cite{multi_universality}, our work explores a much wider range of positive coupling parameters whose analogue in \cite{multi_universality} would be $0\le a\le N^{1-\epsilon}$, thus extending well beyond the perturbative regime $0\le a\le a_0$. However, this generality comes at the cost of restricting to a more classical setting, when only joint correlations of $k$ eigenvalues are monitored for any fixed $k$ independent of $N$.  

The proof of our main result, Theorem \ref{theo:main}, follows the three step strategy outlined in \cite{erdHos2017dynamical}. In the first step, we analyze each matrix separately, without addressing their joint properties. We borrow a so-called \emph{local law} from \cite{slow_corr, cusp_univ}, which provides a concentration bound for the resolvent $(H^{(j)}-z)^{-1}$, where $z\in \C\setminus\R$, around its deterministic counterpart, later denoted by $M^{(j)}(z)$. In the second step we prove Theorem \ref{theo:main} for matrices with sufficiently large independent Gaussian components, relying on the fixed energy universality of the DBM from \cite{fixed_E}. Our main contribution lies in the third step, where we remove these components using the continuity argument known as the Green Function Theorem (GFT). Since we impose an extremely weak decorrelation assumption on $(H^{(1)}, H^{(2)})$, the joint correlation structure of these matrices is highly degenerate. As a result, the GFT needs to be reconstructed, since existing GFT continuity arguments highly rely on a certain non-degeneracy condition (called \emph{fullness} in \cite{slow_corr}) of the correlation structure. This condition ensures that there exists a random matrix (or a pair of them, depending on the setting) with the same second-order correlation structure, having an independent GUE/GOE component of order 1. In the absence of this condition, we develop a new matrix-valued comparison flow that preserves the second order joint correlation structure of $H^{(1)}$ and $H^{(2)}$ while efficiently adding sufficiently large independent GUE/GOE components to them. For more details see Section \ref{sec:plan}. Our approach applies not only to a pair of coupled random matrices, but also to a single random matrix. We believe it may be useful in other problems involving random matrices with highly correlated entries.

\subsection*{Notations and conventions}
We set $[k] := \{1, ... , k\}$ for $k \in \N$ and $\langle A \rangle := N^{-1} \mathrm{Tr}(A)$, $N \in \N$, for the normalized trace of an $N \times N$-matrix $A$. For positive quantities $f, g$ we write $f \lesssim g$, $f \gtrsim g$, to denote that $f \le C g$ and $f \ge c g$, respectively, for some $N$-independent constants $c, C > 0$ that depend only on the basic control parameters of the model in Assumptions~\ref{ass:W-type} and \ref{ass:model} below. In informal explanations, we frequently use the notation $f\ll g$, which indicates that $f$ is "much smaller" than $g$. 

We denote vectors by bold-faced lower case Roman letters $\boldsymbol{x}, \boldsymbol{y} \in \C^{N}$, for some $N \in \N$. Moreover, for vectors $\boldsymbol{x}, \boldsymbol{y} \in \C^{N}$ we define
 \begin{equation*}
	\langle \boldsymbol{x}, \boldsymbol{y} \rangle := \sum_i \bar{x}_i y_i. 
\end{equation*}
Matrix entries are indexed by lower case Roman letters $a, b, c , ... ,i,j,k,... $ from the beginning or the middle of the alphabet and unrestricted sums over those are always understood to be over $\{ 1 , ... , N\}$. 

Finally, we will use the concept  \emph{with very high probability},  meaning that for any fixed $D > 0$, the probability of an $N$-dependent event is bigger than $1 - N^{-D}$ for all $N \ge N_0(D)$. We will use the convention that $\xi > 0$ denotes an arbitrarily small positive exponent, independent of $N$.
 Moreover, we introduce the common notion of \emph{stochastic domination} (see, e.g., \cite{loc_sc_gen}): For two families
\begin{equation*}
	X = \left(X^{(N)}(u) \mid N \in \N, u \in U^{(N)}\right) \quad \text{and} \quad Y = \left(Y^{(N)}(u) \mid N \in \N, u \in U^{(N)}\right)
\end{equation*}
of non-negative random variables indexed by $N$, and possibly a parameter $u$, we say that $X$ is stochastically dominated by $Y$, if for all $\epsilon, D >0$ we have 
\begin{equation*}
	\sup_{u \in U^{(N)}} \mathbf{P} \left[X^{(N)}(u) > N^\epsilon Y^{(N)}(u)\right] \le N^{-D}
\end{equation*}
for large enough $N \ge N_0(\epsilon, D)$. In this case we write $X \prec Y$. If for some complex family of random variables we have $\vert X \vert \prec Y$, we also write $X = O_\prec(Y)$.

\textbf{Acknowledgment:} The author is grateful to L{\' a}szl{\' o} Erd{\H o}s and Giorgio Cipolloni for suggesting the project and fruitful discussions.

\section{Set-up and main results}

\subsection{Model} In this section, we introduce the model of a pair of Hermitian random matrices that are correlated, which is the central object of study in this paper. The discussion is divided into two parts. In Section~\ref{subsec:W-type} we begin with the simplest setting, where each matrix has independent entries (up to the Hermitian symmetry) and the correlations between the two of them are relatively straightforward. Later in Section~\ref{subsec:correlated} we extend this framework to include matrices with more intricate correlations both within and between them. Since the model constructed in Section \ref{subsec:correlated} is the generalization of the one in Section~\ref{subsec:W-type}, our main analysis is performed within this broader framework. However, readers not interested in the general correlated case may skip Section \ref{subsec:correlated} to avoid technical details. At the end of Section \ref{subsec:W-type} we explain how to follow the rest of the paper using only the simpler framework introduced there.

\subsubsection{A pair of correlated Wigner-type matrices}\label{subsec:W-type} Consider a pair of random matrices $(W^{(1)}, W^{(2)})$, which are either both real symmetric or both complex Hermitian. Their entries are centered and normalized so that their typical size is of order $N^{-1/2}$. We assume that within each matrix the entries are independent up to the symmetry constraint, and that the only non-zero correlations between $W_{ab}^{(1)}$, $W_{cd}^{(1)}$ with $a\le b$, $c\le d$, occur in the case $a=c$, $b=d$. Additionally, consider a pair of deterministic matrices $(A^{(1)}, A^{(2)})$ of the same symmetry type as $(W^{(1)}, W^{(2)})$, and define 
\begin{equation}
H^{(j)}:=A^{(j)}+W^{(j)},\quad j=1,2.
\label{eq:model_basic}
\end{equation}
This forms the basic model for the pair of correlated random matrices, which is covered by the analysis performed in the following sections. We now state the precise assumptions on the components of this model. 

\begin{assumption}\label{ass:W-type}
There exist constants $c_0,C_0>0$ such that the following holds for all sufficiently large positive integers $N$.

\noindent \textbf{\emph{(i)}} \emph{(Boundedness of the deformation matrices)} $\lVert A^{(j)}\rVert\le C_0$ for $j=1,2$.

\noindent \textbf{\emph{(ii)}} \emph{(Mean-field regime)} We assume that $W^{(j)}=\big(w^{(j)}_{ab}\big)_{a,b=1}^N$ is a Wigner-type matrix (a concept initially introduced in \cite[Section 1.1]{univ_W-type}), meaning that the entries of $W^{(j)}$ are independent up to the symmetry constraint and for $a,b\in[N]$ we have
\begin{equation}
\E w^{(j)}_{ab}=0,\quad \frac{c_0}{N}\le \E \big| w^{(j)}_{ab}\big|^2\le\frac{C_0}{N}.
\label{eq:init_flat}
\end{equation} 
If $W^{(j)}$ is complex, we further assume that $\Re w^{(j)}_{ab}$ and $\Im w^{(j)}_{ab}$ are independent and that $\E\big(w^{(j)}_{ab}\big)^2 = 0$ for $a\neq b$. Additionally, for any $p\in\N$ there exists a constant $C_p>0$ such that
\begin{equation}
\E\big| w^{(j)}_{ab}\big|^p \le C_p N^{-p/2}.
\label{eq:p_moment}
\end{equation}

\noindent \textbf{\emph{(iii)}} \emph{(Correlations between the matrices)} The entries $w^{(1)}_{ab}$ and $w^{(2)}_{cd}$ are independent for $\{a,b\}\neq \{c,d\}$. Moreover, there exist a (small) constant $\epsilon>0$ and an $N$-dependent control parameter $\alpha\ge N^{-1+\epsilon}$ such that
\begin{equation}
\left\vert\E w^{(1)}_{ab} w^{(2)}_{cd}\right\vert \le (1-\alpha)\left(\E \big| w^{(1)}_{ab}\big|^2\right)^{1/2} \left(\E \big| w^{(2)}_{cd}\big|^2\right)^{1/2},\quad \{a,b\}=\{c,d\}.
\label{eq:decorr}
\end{equation}
In the complex Hermitian case we additionally assume that $\Im w^{(1)}_{ab}$ is independent of $\Re w^{(2)}_{cd}$ for all index choices, and similarly $\Re w^{(1)}_{ab}$ is independent of $\Im w^{(2)}_{cd}$. We also require that \eqref{eq:decorr} holds separately for the real and imaginary parts of the matrix entries. 
\end{assumption}

The main parameter of the model \eqref{eq:model_basic} is $\alpha\in[0,1]$ introduced in Assumption \ref{ass:W-type}(iii). It controls the correlation strength between $W^{(1)}$ and $W^{(2)}$: the smaller is $\alpha$, the stronger may be the correlation. Later in Example \ref{ex:optimal} we explain the significance of the threshold $\alpha\gg N^{-1}$.

In the next section we generalize the model \eqref{eq:model_basic} by applying a linear operator $\Phi^{(j)}$ to $W^{(j)}$, where $\Phi^{(j)}$ maps matrices to matrices. For simplicity, the reader may skip Section \ref{subsec:correlated} and assume that $\Phi^{(j)}$ acts as the identity, which recovers the setting described in this section.

\subsubsection{General correlated case}\label{subsec:correlated} Before generalizing the model introduced in \eqref{eq:model_basic}, we discuss the correlation structure of a single random matrix. Let $W$ be an $N\times N$ random matrix, either real symmetric or complex Hermitian, whose entries are centered and may exhibit non-trivial correlations beside the symmetry condition. The covariance tensor $\Sigma_W$ associated to $W$ is defined through its action on $N\times N$ deterministic matrices
\begin{equation}
\Sigma_W[R]=\E\left[W\mathrm{Tr}\left[ RW\right]\right],\quad \forall R\in\C^{N\times N}.
\label{eq:cov_tensor}
\end{equation}
This is a non-negative linear operator on $\C^{N\times N}$ equipped with the scalar product $\langle R_1,R_2\rangle:=\mathrm{Tr}\left[R_1^*R_2\right]$. Additionally, $\Sigma_W$ preserves the symmetry, i.e. it maps $\mathrm{Sym}_\beta(N)$ into itself, where $\mathrm{Sym}_\beta(N)$ is the set of $N\times N$ matrices with the same symmetry type as $W$. Here we set $\beta=1$ in the real symmetric case and $\beta=2$ in the complex Hermitian one. Since the action of $\Sigma_W$ on $\mathrm{Sym}_\beta(N)$ fully determines the joint second moments of the entries of $W$, we further restrict the covariance tensor to this subspace. 

Let $\Phi:\mathrm{Sym}_\beta(N)\to\mathrm{Sym}_\beta(N)$ be a non-negative\footnote{Non-negativity means that $\langle R^*\Phi[R]\rangle\ge 0$ for any $R\in\mathrm{Sym}_\beta(N)$.} linear operator. Consider the random matrix $\widetilde{W}:=\Phi[W]$. This transformation, known as \emph{linear filtering}, modifies the covariance tensor of the original random matrix $W$. Specifically, the covariance tensor of $\widetilde{W}$ is given by
\begin{equation}
\Sigma_{\widetilde{W}}=\Phi\Sigma_W\Phi.
\label{eq:cov_t_transform}
\end{equation}
In particular, if $W$ is a GOE/GUE matrix, then $\Sigma_W[R]=2(\beta N)^{-1}R$ for any $R\in\mathrm{Sym}_\beta(N)$, leading to $\Sigma_{\widetilde{W}}=2(\beta N)^{-1}\Phi^2$. Thus, linear filtering provides a method to construct a random matrix with any covariance tensor that satisfies the conditions listed below \eqref{eq:cov_tensor}.

Now we turn to the discussion of a pair of correlated random matrices. Let $(W^{(1)}, W^{(2)})$ be a pair of correlated Wigner-type matrices satisfying Assumption \ref{ass:W-type}, which we refer to as the \emph{initial signal}. Let $\Phi^{(1)},\Phi^{(2)}: \mathrm{Sym}_\beta(N)\to\mathrm{Sym}_\beta(N)$ be any non-negative \emph{filtering transformations}.  Applying $\Phi^{(1)}$, $\Phi^{(2)}$ to the initial signal, we obtain the \emph{filtered signal}, which is the correlated matrix pair $\widetilde{W}^{(j)}:=\Phi^{(j)}[W^{(j)}]$, $j=1,2$. This linear filtering introduces non-trivial correlations both within each matrix and between the two matrices. 

Next, consider deformations $A^{(j)}\in\mathrm{Sym}_\beta(N)$ satisfying Assumption \ref{ass:W-type}(i), and set
\begin{equation}
H^{(j)}:=A^{(j)}+\widetilde{W}^{(j)}=A^{(j)}+ \Phi^{(j)}\big[W^{(j)}\big],\quad j=1,2.
\label{eq:model}
\end{equation}
This model extends the one in \eqref{eq:model_basic} and is the main object of our analysis. In addition to Assumption~\ref{ass:W-type} on $(W^{(1)},W^{(2)})$, we impose the following assumptions on the filtering transformations. 

\begin{assumption}\label{ass:model}  \emph{(Non-degeneracy and locality of filtering transformations)} We assume that $ \Phi^{(j)}\ge c_0$ for some fixed $c_0>0$, i.e. that $\langle R\Phi^{(j)}[R]\rangle\ge c_0\langle R^2\rangle$ holds for any $R\in\mathrm{Sym}_\beta(N)$. Next, for $1\le i\le k\le N$ define an $N\times N$ matrix $E_{ik}$ by 
\begin{equation}
\left(E_{ik}\right)_{ab}:=\delta_{i,a}\delta_{k,b},\quad \left(E_{ik}\right)_{ba}:=\left(E_{ik}\right)_{ab},\quad \forall\, 1\le a\le b\le N.
\label{eq:E}
\end{equation}
We further assume that for some $s>2$ and $C_0>0$ it holds that
\begin{equation}
\left\vert\left(\Phi^{(j)}\left[E_{ik}\right]\right)_{ab}\right\vert\le \frac{C_0}{1+(\vert i-a\vert+\vert k-b\vert)^s}
\label{eq:Phi_decay}
\end{equation}
for all $i\le k$ and $a\le b$. Moreover, in the complex Hermitian case we define $E_{ik}^{\Im}$ for $i<k$ as the matrix $E_{ik}$ with $\left(E_{ik}\right)_{ik}$ multiplied by $\ii$ and $\left(E_{ik}\right)_{ki}$ multiplied by $-\ii$. Finally, we impose an additional condition that \eqref{eq:Phi_decay} remain valid when $E_{ik}$ is replaced with $E_{ik}^{\Im}$.
\end{assumption}

The following example shows that when $\Phi^{(j)}$ is a translation-invariant filtering, the condition $\Phi^{(j)}\ge c_0$ can be replaced by a more easily verifiable one.
\begin{example} Let $\mathbb{T}:=\Z/[N\Z]$, and consider a function $F:\mathbb{T}^2\to\C$ satisfying $F(x,y)=\overline{F}(y,x)$ for all $x,y\in\mathbb{T}$. Define the operator $\Phi:\mathrm{Sym}_{\beta}(N)\to \mathrm{Sym}_{\beta}(N)$ as the convolution
\begin{equation}
(\Phi[R])_{ab}=\sum_{c,d\in\mathbb{T}} F(a-c,b-d)R_{cd},\quad\forall a,b\in\mathbb{T},
\label{eq:Phi_conv}
\end{equation}
where, in the case $\beta=1$, we assume that $F$ is real-valued. Assume additionally that the following bound on the discrete Fourier of $F$ holds for all $\varphi,\theta\in [0,1]$:
\begin{equation}
\widehat{F}(\varphi,\theta):=\sum_{x,y\in\mathbb{T}} F(x,y)\ee^{2\pi i(x\varphi-y\theta)}\ge c_0.
\label{eq:FT}
\end{equation}
Then we have $\Phi\ge c_0$.
\end{example}

Condition \eqref{eq:FT} is reminiscent of \cite[(2.7)]{Gauss_corr}, where a similar bound on the discrete Fourier transform was used to ensure the non-degeneracy of the correlation structure. Note that with $\Phi$ defined in \eqref{eq:Phi_conv}, the filtered matrix $\widetilde{W}:=\Phi[W]$ has a translation-invariant correlation structure on the torus $\mathbb{T}^2$. In particular, $\widetilde{w}_{11}$ may be strongly correlated with $\widetilde{w}_{NN}$, which is not compatible with our assumption \eqref{eq:Phi_decay}. However, a careful examination of our argument shows that the distance $\vert i-a\vert +\vert k-b\vert$ in \eqref{eq:Phi_decay} can be replaced by the distance on $\mathbb{T}^2$ between $(i,j)$ and $(a,b)$, with all results remaining valid after minor adjustments to the proofs.

\subsection{Eigenvalue correlation functions} With the basic notations established, we consider a pair of correlated random matrices $(H^{(1)}, H^{(2)})$ and denote their eigenvalues, ordered increasingly, by $\lambda_1^{(j)}\le \lambda_2^{(j)}\le\ldots\le \lambda_N^{(j)}$, $j=1,2$. For $m,n\in\N$ we define the joint eigenvalue correlation function $p_{m,n}^{(1,2)}$ of $H^{(1)}, H^{(2)}$ implicitly via
\begin{equation}
\E \bigg[ \frac{(N-m)!}{N!}\frac{(N-n)!}{N!}\!\!\!\sum\limits_{\substack{i_1,\ldots, i_m\in [N]\\ j_1,\ldots, j_n\in [N]}}f\left(\lambda_{i_1}^{(1)}, \ldots, \lambda_{i_m}^{(1)}, \lambda_{j_1}^{(2)}, \ldots, \lambda_{j_n}^{(2)} \right) \bigg] = \int_{\R^{m+n}} f({\bf x},{\bf y}) p_{m,n}^{(1,2)} ({\bf x},{\bf y})\dif{\bf x}\dif{\bf y}
\label{eq:joint_cor_func}
\end{equation}
for any smooth test function $f:\R^{m+n}\to\R$ with compact support. The summation in the lhs. of \eqref{eq:joint_cor_func} runs over all sets of indices $(i_1,\ldots,i_m)\in [N]^{m}$ and $(j_1,\ldots,j_n)\in [N]^{n}$ with distinct elements, though some $i$'s may coincide with some $j$'s. Note that while $p_{m,n}^{(1,2)}$ depends on $N$, this dependence is omitted in the notation for simplicity. In the special case $n=0$, \eqref{eq:joint_cor_func} reduces to the standard definition of the eigenvalue correlation function $p^{(1)}_{m}$ of $H^{(1)}$. The normalization in \eqref{eq:joint_cor_func} is chosen in such a way that the integral of $p_{m,n}^{(1,2)}$ over $\R^{m+n}$ equals to 1.

In the extreme case when $H^{(1)}$ and $H^{(2)}$ are independent, their joint eigenvalue correlation function factorizes, i.e.
\begin{equation*}
p_{m,n}^{(1,2)}({\bf x},{\bf y})=p_{m}^{(1)}({\bf{x}})p_{n}^{(2)}({\bf y}).
\end{equation*}
Conversely, in the opposite extreme where $H^{(1)}$ and $H^{(2)}$ are identical, \eqref{eq:joint_cor_func} no longer defines a function but rather a distribution, which we denote by ${\bm p}^{(1,1)}_{m,n}$. For example, for any smooth test function $f:\R^2\to\R$ with compact support we have
\begin{equation*}
{\bm p}^{(1,1)}_{1,1}(f) = \int_{\R^2} f(x,y)p_2^{(1)}(x,y)\dif x\dif y + \frac{1}{N}\int_\R f(x,x) p_1^{(1)}(x)\dif x.
\end{equation*}

In order to analyze the joint eigenvalue correlation functions of $H^{(1)}$ and $H^{(2)}$, it is essential to understand the single-point eigenvalue distributions $p^{(1)}_1$ and $p^{(2)}_1$. To this end, we firstly define the \emph{self-energy operator} $\mathcal{S}^{(j)}$ through its action on $\C^{N\times N}$:
\begin{equation*}
\mathcal{S}^{(j)}[R]:=\E \left[\widetilde{W}^{(j)}R\widetilde{W}^{(j)}\right],\quad \forall R\in\C^{N\times N}, 
\end{equation*}
where we use the notations from \eqref{eq:model}. While $\mathcal{S}^{(j)}$ differs from the covariance tensor of $\widetilde{W}^{(j)}$, it contains the same information about the joint second moments of the entries of $\widetilde{W}^{(j)}$. For any $z\in\C\setminus\R$ the $N\times N$ matrix $M^{(j)}(z)$ is obtained as the unique solution to the \emph{Matrix Dyson Equation} \cite{MDEreview}
\begin{equation}
-M^{(j)}(z)^{-1}=z-A^{(j)}+\mathcal{S}^{(j)}[M^{(j)}(z)],\quad \Im z\Im M^{(j)}(z)>0.
\label{eq:MDE}
\end{equation} 
Next, the \emph{self-consistent density of states} (abbreviated as scDoS) $\rho^{(j)}$ is given by
\begin{equation*}
\rho^{(j)}(z):=\pi^{-1}\vert\langle \Im M^{(j)}(z)\rangle\vert,\, z\in\C\setminus\R,\quad \rho^{(j)}(x):=\lim_{\eta\to+0} \rho^{(j)}(x+\ii \eta),\, x\in\R,
\end{equation*}
and for any $\kappa>0$ we define the $\kappa$-bulk as follows:
\begin{equation*}
\mathbf{B}_\kappa^{(j)}:=\{x\in\R\,:\, \rho^{(j)}(x)\ge\kappa\}.
\end{equation*}
The scDoS $\rho^{(j)}$ approximates the 1-point correlation function $p_1^{(j)}$ in the weak sense. We will justify this approximation later along the proof of Theorem \ref{theo:main}, relying on the results from \cite{slow_corr}.

\subsection{Main result} We are now ready to present our main result, which describes the correlations between fluctuations of individual eigenvalues of $H^{(1)}$ and $H^{(2)}$. The typical spacing between eigenvalues in the bulk of the spectrum is $N^{-1}$. In order to access the joint local eigenvalue statistics of $H^{(1)}, H^{(2)}$ we fix $E^{(j)}\in\mathbf{B}^{(j)}_\kappa$ for $j=1,2$ and some $\kappa>0$. We then rescale the arguments of $p^{(1,2)}_{m,n}$ around $E^{(1)}, E^{(2)}$ by a factor $N$. Our result shows that this rescaled function factorizes up to a small error term, provided $H^{(1)}$ and $H^{(2)}$ are at least slightly decorrelated in the sense of Assumption \ref{ass:W-type}(iii). To keep the main idea simple, we present the assumptions in two alternative forms: the first applies to the simpler model \eqref{eq:model_basic} and requires only Assumption \ref{ass:W-type}, while the second addresses the more general model \eqref{eq:model} and additionally requires Assumption \ref{ass:model}.

\begin{theorem}[Independence of local statistics]\label{theo:main} Assume one of the following two sets of conditions:
\begin{enumerate}
\item\emph{(Wigner-type case)} Matrices $H^{(1)}$, $H^{(2)}$ are given by \eqref{eq:model_basic} and satisfy Assumption \ref{ass:W-type}.
\item\emph{(Correlated case)} Matrices $H^{(1)}$, $H^{(2)}$ are given by \eqref{eq:model} and satisfy Assumptions \ref{ass:W-type} and \ref{ass:model}.
\end{enumerate}
Fix a (small) $\kappa>0$ and take $E^{(j)}\in\mathbf{B}^{(j)}_\kappa$ for $j=1,2$. Then for any $m, n\in\N$ and a compactly supported smooth test function $F\in C_c^{\infty}(\R^{m+n})$ it holds that
\begin{equation}
\begin{split}
&\int_{\R^{m+n}} F({\bf x},{\bf y})\Bigg[ p_{m,n}^{(1,2)}\left(E^{(1)}+\frac{{\bf x}}{\rho^{(1)}(E^{(1)})N}, E^{(2)}+\frac{{\bf y}}{\rho^{(2)}(E^{(2)})N}\right)\\
&\quad - p_{m}^{(1)}\left(E^{(1)}+\frac{{\bf x}}{\rho^{(1)}(E^{(1)})N}\right)p_{n}^{(2)}\left(E^{(2)}+\frac{{\bf y}}{\rho^{(2)}(E^{(2)})N}\right)\Bigg]\dif {\bf x}\dif {\bf y} = \mathcal{O}\left(N^{-c(m,n)}\right), 
\label{eq:goal}
\end{split}
\end{equation}
where $c(m,n)>0$ depends only on $m$ and $n$, and the implicit constant in $\mathcal{O}$ depends on $F$.
\end{theorem}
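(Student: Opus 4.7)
The plan is to follow the three-step strategy outlined just before Section 1.1. In the first step I would apply the individual-matrix local laws from \cite{slow_corr, cusp_univ} separately to each of $H^{(1)}$ and $H^{(2)}$. These provide the averaged and isotropic concentration of $G^{(j)}(z) := (H^{(j)} - z)^{-1}$ around its deterministic counterpart $M^{(j)}(z)$ down to imaginary parts $\mathrm{Im}\, z \gtrsim N^{-1+\xi}$. Standard consequences include optimal rigidity of the bulk eigenvalues of $H^{(j)}$ at $E^{(j)} \in \mathbf{B}_\kappa^{(j)}$ and the weak-sense approximation $p_1^{(j)} \approx \rho^{(j)}$. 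This reduces \eqref{eq:goal} to bounding the difference between $\E\prod_{i} \langle G^{(1)}(z_i)\rangle \prod_{k} \langle G^{(2)}(w_{k})\rangle$ and its factorization $\E\prod_i \langle G^{(1)}(z_i)\rangle\cdot \E\prod_{k} \langle G^{(2)}(w_{k})\rangle$, with the spectral parameters at mesoscopic distance $\mathrm{Im}\, z_i,\,\mathrm{Im}\, w_{k} \sim N^{-1+\xi}$ from $E^{(1)}$ and $E^{(2)}$.

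In the second step I would introduce the Gaussian-divisible pair $H_t^{(j)} := H^{(j)} + \sqrt{t}\, U^{(j)}$, where $U^{(1)}, U^{(2)}$ are mutually independent GOE/GUE matrices (independent of $H^{(j)}$), and $t$ is chosen so that $N^{-1+\xi} \le t \ll 1$. The fixed energy universality for DBM from \cite{fixed_E} applies to each coordinate separately from its initial condition $H^{(j)}$, for which Step~1 provides the required local law. Because the driving noises $U^{(1)}$ and $U^{(2)}$ are independent, the DBM analyses of the two coordinates decouple completely at the microscopic scale: one obtains the sine kernel (or its real-symmetric analogue) for each coordinate and, crucially, the factorization of the joint $(m,n)$-point correlation function of $(H_t^{(1)}, H_t^{(2)})$ around $(E^{(1)}, E^{(2)})$. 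This establishes \eqref{eq:goal} for the Gaussian-divisible pair.

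The heart of the argument is the third step: a Green function comparison showing that the mesoscopic resolvent observables above coincide, up to $N^{-c}$, between $t = 0$ and the chosen small $t$. Classical GFTs rely on a \emph{fullness} condition on the joint covariance structure of $(H^{(1)}, H^{(2)})$, which fails whenever $\alpha$ is small: in that regime the joint second moments are concentrated along the ``diagonal direction'' $H^{(1)} \approx H^{(2)}$, so any Gaussian reference matching these moments contains no independent GUE/GOE component in each coordinate. I would instead construct a new matrix-valued Ornstein--Uhlenbeck-type flow on the pair $(H^{(1)}, H^{(2)})$ that (a) preserves the joint second-order correlation structure at every time, and (b) efficiently injects into each coordinate an independent GOE/GUE component of size $\sqrt{t}$. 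The flow would be built by splitting the driving matrix Brownian motion into a ``correlated'' piece reproducing the existing cross-covariance of the entries and an ``independent'' piece of relative weight $\sqrt{\alpha}$, which is exactly the slack granted by \eqref{eq:decorr}. The comparison then proceeds by differentiating the mesoscopic resolvent product along the flow and carrying out a cumulant expansion as in the single-matrix correlated local law, with \eqref{eq:Phi_decay} controlling cross-cumulants between distant indices and the isotropic local law absorbing each term against a factor $N^{-c}$.

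The main obstacle is precisely this Step~3 reconstruction: designing the OU-type flow so that second-moment preservation and Gaussian injection coexist in the nearly degenerate regime, and verifying that the ensuing cumulant expansion closes uniformly down to the optimal threshold $\alpha \geq N^{-1+\epsilon}$. Once this flow and the associated bookkeeping are in place, chaining the three steps yields \eqref{eq:goal}.
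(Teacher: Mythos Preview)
Your three-step outline matches the paper's strategy, and Steps~1 and~2 are essentially what the paper does (modulo the cosmetic point that the paper adds the Gaussian component to the pre-filtered $W^{(j)}$ and then uses $\Phi^{(j)}\ge c_0$ to extract an independent GOE/GUE piece at the level of $H^{(j)}$). The gap is in Step~3: the flow you describe will not close down to the threshold $\alpha\ge N^{-1+\epsilon}$.

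You propose to keep a standard OU drift and split the \emph{diffusion} into a correlated piece reproducing $\Sigma^{(1,2)}$ plus an independent piece of weight $\sqrt{\alpha}$. With that design the independent Gaussian component accumulated by time $t$ has size $\sqrt{\alpha t}$, not $\sqrt{t}$, so to reach $s\gtrsim N^{-1+\sigma}$ in \eqref{eq:g_component} you must run the flow for $t\sim N^{-1+\sigma}/\alpha$. Along a standard OU flow the third-cumulant contribution to $\partial_t\E R_t$ is of order $N^{1/2+C\xi}$ (there is no extra smallness: the drift is $-\tfrac12 W_t$ and each $\kappa_3\sim N^{-3/2}$ is summed over $N^2$ index pairs). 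Integrating to $t\sim N^{-1+\sigma}/\alpha$ gives an error $\sim N^{1/2+\sigma}/\alpha$, which for $\alpha\sim N^{-1+\epsilon}$ is huge. So your construction contradicts your own requirement~(b) that the injected component have size $\sqrt{t}$, and the comparison does not go through.

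The paper's mechanism is different: the diffusion uses \emph{purely independent} Brownian motions, and the degeneracy is pushed into the drift, which is $-(\beta N\Sigma^{(1,2)})^{-1}[W_t^{(1,2)}]$; see \eqref{eq:GFT_flow}. This singular drift looks dangerous since $\lVert(\Sigma^{(1,2)})^{-1}\rVert\sim N/\alpha$, but the point of \eqref{eq:drift_naive}--\eqref{eq:drift} is that $(\Sigma^{(1,2)})^{-1}[W]$ is a factor $\sqrt{\alpha}$ smaller than the operator-norm bound predicts, because $(\Sigma^{(1,2)})^{-1/2}[W]$ has order-one entry variances. Two consequences follow. First, Lemma~\ref{lemma:match}(2): for $t\le\alpha$ the flow genuinely injects an independent Gaussian of size $\sqrt{t}$, so $t=N^{-1+\epsilon/4}$ suffices. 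Second, and this is the idea missing from your cumulant bookkeeping, the third-order terms in \eqref{eq:R_expanded} come in the combination $s^{ab}_{j1}\kappa^{(p+1,q)}+s^{ab}_{j2}\kappa^{(p,q+1)}$, which by \eqref{eq:l=2_cancel} is $\lesssim N^{-1}\sqrt{N/\alpha}$ rather than the naive $N^{-1/2}/\alpha$. This gives $|\partial_t\E R_t|\lesssim\sqrt{N/\alpha}\,N^{C\xi}$ and hence a total error $\sqrt{N/\alpha}\,N^{-1+\epsilon/4+C\xi}\le N^{-\epsilon/4+C\xi}$, which is small. Without this specific drift and the resulting third-order cancellation, the GFT cannot reach the optimal $\alpha$-threshold.
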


The following example demonstrates that the threshold $\alpha=N^{-1}$ introduced in Assumption \ref{ass:W-type}(iii) is optimal. Specifically, when $\alpha\gg N^{-1}$, Theorem \ref{theo:main} establishes that the fluctuations of individual eigenvalues of $H^{(1)}$ and $H^{(2)}$ are effectively independent. In contrast, Example \ref{ex:optimal} shows that in the regime $\alpha\ll N^{-1}$ the fluctuations of these eigenvalues become highly correlated for certain choices of $H^{(1)}, H^{(2)}$ which we now describe. 

\begin{example}[Optimality of $N^{-1}$-threshold]\label{ex:optimal} Fix (small) $\epsilon, \kappa>0$. Let $W$, $W^{(1)}_G$, $W^{(2)}_G$ be independent random matrices such that $W$ is a Wigner-type matrix satisfying Assumption \ref{ass:W-type}(ii) and $W^{(1)}_G$, $W^{(2)}_G$ are GUE/GOE matrices of the same symmetry type. For $0<\alpha<N^{-1-\epsilon}$ set 
\begin{equation}
W^{(j)}:=W+\sqrt{\alpha}W^{(j)}_G,\quad j=1,2.
\label{eq:ex_W}
\end{equation}
Take any deterministic $A:=A^{(1)}=A^{(2)}$ satisfying Assumption \ref{ass:W-type}(i) and $\Phi^{(1)}=\Phi^{(2)}$ which act identically on $\mathrm{Sym}_\beta(N)$. In this case $\rho^{(1)}=\rho^{(2)}:=\rho$ and we pick an energy $E\in\mathbf{B}_\kappa(\rho)$. Then for $H^{(1)}$, $H^{(2)}$ constructed in \eqref{eq:model} and for any compactly supported smooth test function $F\in C_c^{\infty}(\R^{m+n})$ it holds that
\begin{equation}
\begin{split}
&\int_{\R^2} F(x,y) p_{1,1}^{(1,2)}\left(E+\frac{x}{\rho(E)N}, E+\frac{y}{\rho(E)N}\right)\dif  x\dif y = \rho(E)\int_\R F(x,x) p_1^{(1)}\left(E+\frac{x}{\rho(E)N}\right)\dif x\\
& \quad+\int_{\R^2} F(x,y)p_2^{(1)}\left(E+\frac{x}{\rho(E)N},E+\frac{y}{\rho(E)N}\right) \dif x\dif y + \mathcal{O}\left(N^{-c}\right). 
\end{split}
\label{eq:optimal}
\end{equation}
In \eqref{eq:optimal} the exponent $c>0$ is universal and the implicit constant in $\mathcal{O}$ depends on $F$.
\end{example}

It is easy to see that $H^{(1)}, H^{(2)}$ constructed in Example \ref{ex:optimal} satisfy all conditions listed in Assumption \ref{ass:W-type} apart from \eqref{eq:decorr}. The proof of \eqref{eq:optimal} follows by a simple perturbative argument and is presented in Appendix~\ref{app:optimal}.

\section{Proof of Theorem \ref{theo:main}}\label{sec:plan}

In this section we prove Theorem \ref{theo:main}, deferring some technical details to later sections. In particular, we show how to handle the highly degenerate correlation structure of $(H^{(1)}, H^{(2)})$. The proof is carried out in three steps with the main novelty lying in the third step. First, we establish a bulk local law for $H^{(j)}$ constructed in \eqref{eq:model}. This step provides bounds on the resolvents of $H^{(1)}$ and $H^{(2)}$ individually but does not capture their joint behavior. This local law serves as an essential input for the next two steps. Then, we prove Theorem \ref{theo:main} in the case when $W^{(1)}$ , $W^{(2)}$ have sufficiently large independent Gaussian components. This is done in Proposition \ref{prop:DBM}. Finally, redesigning a continuity argument known as the Green Function Theorem (GFT), we remove these Gaussian components in Proposition \ref{prop:p_comparison}. 

\begin{proposition}[Local law in the bulk of the spectrum]\label{prop:loc_law} Fix $\kappa, \varepsilon>0$. For $z_j\in\C\setminus\R$, $j=1,2$, denote the resolvent of $H^{(j)}$ by $G^{(j)}(z_j):=(H^{(j)}-z_j)^{-1}$. Then under Assumptions \ref{ass:W-type} and \ref{ass:model} it holds that
\begin{equation}
\left\langle \bm{x}, \left(G^{(j)}(z_j)-M^{(j)}(z_j)\right)\bm{y}\right\rangle \prec \frac{\lVert \bm{x}\rVert\,\lVert \bm{y}\rVert}{\sqrt{N\vert \Im z_j\vert}}\quad \text{and}\quad \left\langle \left(G^{(j)}(z_j)-M^{(j)}(z_j)\right)B\right\rangle \prec \frac{\lVert B\rVert}{N\vert \Im z_j\vert}
\label{eq:loc_law}
\end{equation}
uniformly in $z_1,z_2\in\C$ with $\Re z_j\in\mathbf{B}_\kappa^{(j)}$, $N^{-1+\varepsilon}<\vert\Im z_j\vert<N^{100}$ and deterministic $\bm{x},\bm{y}\in\C^N$, $B\in\C^{N\times N}$.
\end{proposition}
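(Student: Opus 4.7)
Since the proposition deals with the individual resolvents $G^{(j)}(z_j)$ and makes no joint claim, the two indices $j\in\{1,2\}$ decouple completely, and the inter-matrix decorrelation \eqref{eq:decorr} plays no role at this stage. The plan is simply to verify that each $H^{(j)} = A^{(j)} + \widetilde{W}^{(j)}$, with $\widetilde{W}^{(j)} := \Phi^{(j)}[W^{(j)}]$, falls within the scope of the general correlated local law of Erd\H{o}s--Kr\"uger--Schr\"oder established in \cite{slow_corr} and extended in \cite{cusp_univ}, and then to invoke those results as a black box.

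\emph{Checking the correlation structure.} The entries of $\widetilde{W}^{(j)}$ are centered by linearity of $\Phi^{(j)}$. Because $W^{(j)}$ has independent entries, every joint cumulant of the entries of $\widetilde{W}^{(j)}$ reduces to a sum over a single index pair $(i,k)$ weighted by products of the kernel $\bigl(\Phi^{(j)}[E_{ik}]\bigr)_{ab}$ (and its imaginary analogue) against moments of a single entry of $W^{(j)}$; the polynomial decay \eqref{eq:Phi_decay} with $s>2$ therefore transfers to polynomial decay of cumulants in the natural graph distance on $[N]^2$, as required by \cite{slow_corr}. Per-entry moment bounds for $\widetilde{w}^{(j)}_{ab}$ of order $N^{-1/2}$ follow from \eqref{eq:p_moment} combined with the $\ell^1$-summability encoded in \eqref{eq:Phi_decay}. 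The boundedness of $A^{(j)}$ is assumed directly in Assumption \ref{ass:W-type}(i).

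\emph{Flatness of the self-energy operator.} The main nontrivial verification is the two-sided flatness bound
\begin{equation*}
c\,\langle R \rangle\, I \;\le\; \mathcal{S}^{(j)}[R] \;\le\; C\,\langle R \rangle\, I,\qquad R\in\mathrm{Sym}_\beta(N),\ R\ge 0,
\end{equation*}
which serves as the key analytic input for the local law. The upper bound is routine from the decay \eqref{eq:Phi_decay} and the upper variance bound in \eqref{eq:init_flat}. For the lower bound I would use $\Phi^{(j)}\ge c_0$ from Assumption~\ref{ass:model} together with the lower variance bound $\E|w^{(j)}_{ab}|^2\ge c_0/N$ from \eqref{eq:init_flat}: writing $\mathcal{S}^{(j)}[R]=\E[\Phi^{(j)}[W^{(j)}]\,R\,\Phi^{(j)}[W^{(j)}]]$ and exploiting non-negativity of $\Phi^{(j)}$ on the Hilbert--Schmidt inner product, $\mathcal{S}^{(j)}$ can be compared from below with the self-energy operator of a Wigner-type matrix with flat variance profile, which is flat in the desired sense. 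I expect this to be the main technical step, as the abstract operator inequality $\Phi^{(j)}\ge c_0$ must be converted into a pointwise positivity statement about $\mathcal{S}^{(j)}[R]$ as an $N\times N$ matrix for each test $R\ge 0$.

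Once the hypotheses above are verified, both the isotropic and averaged local laws from \cite{slow_corr, cusp_univ} apply directly to $H^{(j)}$ and yield \eqref{eq:loc_law} uniformly in the spectral parameters $z_j$ with $\Re z_j\in\mathbf{B}_\kappa^{(j)}$ and $N^{-1+\varepsilon}<|\Im z_j|<N^{100}$, completing the proof.
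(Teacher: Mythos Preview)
Your overall strategy---verify the hypotheses of \cite[Theorem~2.8]{cusp_univ} for each $H^{(j)}$ separately and invoke that result as a black box---is correct and matches the paper. Your treatment of the moment bounds and of the polynomial cumulant decay is essentially the same as the paper's.

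The gap is in the non-degeneracy condition. The local laws of \cite{slow_corr,cusp_univ} do \emph{not} require flatness of the self-energy operator $\mathcal{S}^{(j)}$; the relevant hypothesis is \emph{fullness} of the covariance tensor $\Sigma_{\widetilde{W}^{(j)}}$, i.e.\ $\Sigma_{\widetilde{W}^{(j)}}\ge c/N$ as an operator on $\mathrm{Sym}_\beta(N)$ with the Hilbert--Schmidt inner product. This is verified in one line: by \eqref{eq:cov_t_transform} and $\Phi^{(j)}\ge c_0$ we have $\Sigma_{\widetilde{W}^{(j)}}=\Phi^{(j)}\Sigma_{W^{(j)}}\Phi^{(j)}\ge c_0^2\,\Sigma_{W^{(j)}}$, and the lower variance bound in \eqref{eq:init_flat} gives $\Sigma_{W^{(j)}}\gtrsim N^{-1}$. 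By contrast, the flatness bound $\mathcal{S}^{(j)}[R]\ge c\langle R\rangle I$ you propose is a \emph{matrix}-ordering statement, and the Hilbert--Schmidt operator inequality $\Phi^{(j)}\ge c_0$ gives no direct control over it; your own remark that ``the abstract operator inequality must be converted into a pointwise positivity statement'' correctly flags this, but the conversion is neither needed nor available in general for a non-diagonal filter $\Phi^{(j)}$.

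You also miss a subtlety the paper addresses: the second part of \cite[Assumption~2.3]{cusp_univ}, which limits the number of entries correlated with a given one, can fail here because \eqref{eq:Phi_decay} allows every entry of $\widetilde{W}^{(j)}$ to be (weakly) correlated with every other. The paper bypasses this by performing the cumulant expansion directly in the independent variables $w^{(j)}_{ab}$ rather than in the filtered entries $\widetilde{w}^{(j)}_{ab}$, which makes the truncation trivial and reproduces the same expansion after rewriting.
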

We derive \eqref{eq:loc_law} directly from \cite[Theorem 2.8]{cusp_univ} and postpone the details to Appendix \ref{app:loc_law}. While Proposition \ref{prop:loc_law} is not explicitly referenced further in this section, it plays a crucial role in justifying the second and third steps of our strategy.

The second step of the proof is encapsulated in the following proposition.
\begin{proposition}\label{prop:DBM} Let $H^{(1)}, H^{(2)}$ be defined as in \eqref{eq:model} with ingredients of this model satisfying Assumptions \ref{ass:W-type} and \ref{ass:model}. Fix a (small) $\sigma>0$ and assume that for some $N^{-1+\sigma}\le s\le N^{-\sigma}$ we have
\begin{equation}
W^{(j)}=\widehat{W}^{(j)}+\sqrt{s}\,W_G^{(j)},\quad j=1,2,
\label{eq:g_component}
\end{equation}
where $W_G^{(1)}$, $W_G^{(2)}$ are independent GUE/GOE matrices, which are also independent from the Wigner-type matrices $\widehat{W}^{(1)}$ and $\widehat{W}^{(2)}$ satisfying Assumption \ref{ass:W-type}(ii).  Then \eqref{eq:goal} holds for $H^{(1)}$ and $H^{(2)}$.
\end{proposition}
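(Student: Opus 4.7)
My plan is to realize $(H^{(1)},H^{(2)})$ as the terminal state of two \emph{independent} matrix Brownian flows and then apply the single-matrix fixed energy universality of \cite{fixed_E} to each component separately. Independence of the drivers is preserved throughout and will force the joint local statistics to factorize at the level of independent reference ensembles. Since $\Phi^{(j)} \geq c_0 I$ by Assumption \ref{ass:model}, the operator $(\Phi^{(j)})^2 - c_0^2 I$ is positive semidefinite, so I decompose the filtered Gaussian ingredient in distribution as
\begin{equation*}
\sqrt{s}\,\Phi^{(j)}\bigl[W_G^{(j)}\bigr] \stackrel{d}{=} c_0 \sqrt{s}\, U^{(j)} + \sqrt{s}\, V^{(j)},
\end{equation*}
where $U^{(j)}$ is a standard GUE/GOE of the appropriate symmetry type, $V^{(j)}$ is an independent Gaussian with covariance $\tfrac{2}{\beta N}\bigl((\Phi^{(j)})^2 - c_0^2 I\bigr)$, and the pairs $(U^{(j)}, V^{(j)})$ are chosen mutually independent across $j \in \{1,2\}$. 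Absorbing $V^{(j)}$ into the non-Gaussian part, I define $\widetilde H^{(j)} := A^{(j)} + \Phi^{(j)}[\widehat W^{(j)}] + \sqrt{s}\,V^{(j)}$; this still falls in the class covered by Proposition \ref{prop:loc_law}, since the added Gaussian perturbation is of size $N^{-1/2}$ and its covariance inherits the locality \eqref{eq:Phi_decay}. Then $H^{(j)} = \widetilde H^{(j)} + c_0 \sqrt{s}\, U^{(j)}$ is a genuinely Gaussian-divisible ensemble with an independent GUE/GOE component of scale $c_0\sqrt{s} \gtrsim N^{-(1-\sigma)/2}$.

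\textbf{DBM coupling to independent references.} Let $B^{(1)}, B^{(2)}$ be independent $\mathrm{Sym}_\beta(N)$-valued Brownian motions with $B^{(j)}(s) \stackrel{d}{=} \sqrt{s}\,U^{(j)}$, so that $H^{(j)}(t) := \widetilde H^{(j)} + c_0 B^{(j)}(t)$ satisfies $H^{(j)}(s) \stackrel{d}{=} H^{(j)}$, and its eigenvalues evolve under the standard $\beta$-Dyson Brownian motion driven by $B^{(j)}$. I then introduce independent reference GUE/GOE matrices $G^{(1)}, G^{(2)}$ (independent also of $\widetilde H^{(j)}, B^{(j)}$) and set $G^{(j)}(t) := G^{(j)} + c_0 B^{(j)}(t)$, driven by the \emph{same} Brownian motion as the corresponding $H^{(j)}$. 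The fixed energy universality of \cite{fixed_E}, using the bulk local law of Proposition \ref{prop:loc_law} applied to $\widetilde H^{(j)}$ as input, then delivers a coupling bound showing that the microscopic eigenvalue point process of $H^{(j)}(s)$ at $E^{(j)}$ agrees with that of $G^{(j)}(s)$ at the corresponding bulk energy up to an error $O(N^{-c})$ when smeared against any smooth compactly supported test function.

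\textbf{Factorization and main obstacle.} Since $G^{(1)}, G^{(2)}, B^{(1)}, B^{(2)}$ are mutually independent, the pair $(G^{(1)}(s), G^{(2)}(s))$ has an exact product law, so its joint $(m,n)$-point correlation function factorizes as $p_m^{G^{(1)}(s)}({\bf x})\,p_n^{G^{(2)}(s)}({\bf y})$; combining this exact factorization with two applications of the single-matrix coupling bound from the previous paragraph, together with the corresponding factorization at the level of the individual $p_m^{(j)}$, produces the estimate \eqref{eq:goal} claimed in Proposition \ref{prop:DBM}. The subtle point that I expect to be the main obstacle is the \emph{joint} coupling step: the homogenization estimate of \cite{fixed_E} needs to be applicable simultaneously to both pairs $(H^{(j)}(s), G^{(j)}(s))$ on one probability space, which requires that the stochastic domination and local law inputs hold jointly for $\widetilde H^{(1)}$ and $\widetilde H^{(2)}$. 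This is guaranteed by a union bound on the very-high-probability events supplied by Proposition \ref{prop:loc_law}, but one has to keep careful track of conditional versus joint probabilities because $\widetilde H^{(1)}$ and $\widetilde H^{(2)}$ may be strongly correlated through $\widehat W^{(1)}$ and $\widehat W^{(2)}$, while the Brownian drivers and reference ensembles on the two sides are fully decoupled.
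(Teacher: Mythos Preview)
Your proposal is correct and follows essentially the same route as the paper: extract a genuine GOE/GUE component from $\Phi^{(j)}[W_G^{(j)}]$ using $\Phi^{(j)}\ge c_0$, verify the bulk local law for the remainder $\widetilde H^{(j)}$, and then invoke the fixed energy universality of \cite{fixed_E} on each side.  The paper organizes the final step slightly differently---rather than a pathwise coupling plus union bound, it applies \cite[Theorem~2.2]{fixed_E} directly at the level of correlation functions via an iterated conditioning (first condition on $\widehat H^{(1)},\widehat H^{(2)},W_G^{(2)}$ and apply the theorem to the sum over $\bm\lambda^{(1)}$, then repeat for $\bm\lambda^{(2)}$); it also isolates a small technical lemma (Lemma~\ref{lem:cond_rho}) handling the replacement of the self-consistent density $\rho^{(j)}$ by the conditional density $\rho_c^{(j)}$, which is needed because the statement in \cite{fixed_E} is formulated in terms of the latter and which you should make explicit.
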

\noindent The proof of Proposition \ref{prop:DBM} is presented in Section \ref{sec:GD}. 

We now introduce our novel GFT technique and discuss its key features. Combine $W^{(1)}$ and $W^{(2)}$ into a single block-diagonal matrix
\begin{equation*}
W^{(1,2)}=\mathrm{diag}\left(W^{(1)},W^{(2)}\right) :=
\begin{pmatrix}
W^{(1)}&0\\
0&W^{(2)}
\end{pmatrix}.
\end{equation*}
Let $\Sigma^{(1,2)}:=\Sigma_{W^{(1,2)}}$ be the covariance tensor associated to $W^{(1,2)}$ (see \eqref{eq:cov_tensor} for the definition). This is a non-negative linear operator acting on $\mathrm{Sym}_\beta(2N)$. Due to the $2\times 2$ block structure of $W^{(1,2)}$, the subspace $\mathrm{Sym}_\beta(N) \oplus \mathrm{Sym}_\beta(N)\subset \mathrm{Sym}_\beta(2N)$ is invariant under the action of $\Sigma^{(1,2)}$, which allows us to restrict $\Sigma^{(1,2)}$ to this subspace. Assumption \ref{ass:W-type}(iii) ensures that $\Sigma^{(1,2)}$ is invertible and, furthermore, that
\begin{equation}
\Sigma^{(1,2)}\gtrsim N^{-1}\alpha.
\label{eq:TS_bound}
\end{equation}
This condition is significantly weaker than the customary \emph{fullness} condition \cite[Assumption F]{slow_corr}, which is typically required in GFT arguments. In our setting the fullness condition corresponds to $\Sigma^{(1,2)}\gtrsim N^{-1}$, but this does not hold in the relevant regime when $\alpha$ is much smaller than 1. 

Our main contribution lies in the following matrix-valued flow interpolating between the initial pair of matrices and the pair of Gaussian divisible matrices. We define the GFT flow as the strong solution to the stochastic differential equation
\begin{equation}
\dif W^{(1,2)}_t = - \left(\beta N\Sigma^{(1,2)}\right)^{-1}\left[W^{(1,2)}_t\right]\dif t + N^{-1/2}\mathrm{diag}\left(\dif B^{(1)}_t, \dif B^{(2)}_t\right), \quad W^{(1,2)}_0=W^{(1,2)},
\label{eq:GFT_flow}
\end{equation}
where $\dif B^{(1)}_t$ and $\dif B^{(2)}_t$ are independent matrix-valued Brownian motions of the same symmetry class as $W^{(1,2)}$. Namely, the covariance structure of $B^{(j)}_t=\big(B^{(j)}_{t,ab}\big)_{a,b=1}^N$, $j=1,2$, is given by
\begin{equation*}
\E B^{(j)}_{t,ab}B^{(j)}_{t,cd}=tN^{-1}\delta_{ac}\delta_{bd}(1+\delta_{ab}),\quad \forall a,\,b,\,c,\,d\in[N]\,\,\text{with}\,\,a\le b,\, c\le d,
\end{equation*}
in the real symmetric case, and
\begin{equation*}
\E \ B^{(j)}_{t,ab}B^{(j)}_{t,cd} = tN^{-1}\delta_{ad}\delta_{bc},\quad \forall a,\,b,\,c,\,d\in[N],
\end{equation*}
in the complex Hermitian case. For instance, when $W^{(1)}, W^{(2)}$ are independent GOE/GUE matrices, \eqref{eq:GFT_flow} reduces to a pair of independent standard matrix-valued Ornstein-Uhlenbeck flows. While we have introduced the GFT flow in the context of two coupled matrices, resulting in the block-diagonal structure of $W^{(1,2)}$, this structure is not essential. This flow can be similarly defined for a single matrix.

Note that the drift term in \eqref{eq:GFT_flow} is singular, as the smallest eigenvalue of the operator $\Sigma^{(1,2)}$ may be very small. However, this term is significantly smaller than a naive estimate would suggest. To demonstrate this, denote for simplicity $W:=W^{(1,2)}$ and $\Sigma:=\Sigma^{(1,2)}$. To quantify the size of the drift term, consider, for instance, its Hilbert-Schmidt norm. A straightforward calculation relying on \eqref{eq:TS_bound} gives that
\begin{equation}
\E \langle \vert\Sigma^{-1}[W]\vert^2\rangle \lesssim \E \frac{N^2}{\alpha^2}\langle WW^*\rangle \lesssim \frac{N^2}{\alpha^2},
\label{eq:drift_naive}
\end{equation}
where in the final inequality we used \eqref{eq:init_flat} from Assumption \ref{ass:W-type}. To refine this estimate, we observe that due to \eqref{eq:cov_t_transform} the covariance tensor associated to $X:=\Sigma^{-1/2}[W]$ acts as the identity. In particular, the variances of the entries of $X$ are of order 1, and $\E\langle XX^*\rangle\lesssim N$. This yields
\begin{equation}
\E \langle \vert\Sigma^{-1}[W]\vert^2\rangle  = \E \langle \vert\Sigma^{-1/2}[X]\vert^2\rangle  \lesssim \E \frac{N}{\alpha}\langle XX^*\rangle \lesssim \frac{N^2}{\alpha},
\label{eq:drift}
\end{equation}
where we again used \eqref{eq:TS_bound}. Remarkably, the bound in \eqref{eq:drift} is $\alpha$ times better then the naive bound in \eqref{eq:drift_naive}. This interplay between $\Sigma^{-1}$ and $W$ is the key aspect of the flow \eqref{eq:GFT_flow} which allows us to run the comparison process for sufficiently long time while keeping the increments along the flow small.

The following statement summarizes the key properties of the flow \eqref{eq:GFT_flow}. The proof is given in Appendix~\ref{app:flow}.

\begin{lemma}[Properties of the GFT flow]\label{lemma:match} Under Assumption \ref{ass:W-type} the following holds.
\begin{enumerate}
\item Let $\Sigma_t^{(1,2)}$ be the covariance tensor associated to $W^{(1,2)}_t$. Then $\Sigma_t^{(1,2)}=\Sigma^{(1,2)}$ for any $t\ge 0$, i.e. the joint second order correlation structure is preserved under the flow \eqref{eq:GFT_flow}.
\item There exists a universal constant $c_*>0$ such that for any $t\in [0,\alpha]$ the solution to \eqref{eq:GFT_flow} can be represented in the form \eqref{eq:g_component} for some $s\in [c_*t, c_*^{-1}t]$.
\end{enumerate}
\end{lemma}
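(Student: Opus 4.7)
For part (1), the natural approach is to recognize the equation (3.4) as a matrix-valued Ornstein--Uhlenbeck flow and to verify preservation of covariance at the infinitesimal level. I would define $\Sigma_t[R]:=\mathbf{E}\big[W^{(1,2)}_t\,\mathrm{Tr}(RW^{(1,2)}_t)\big]$ for deterministic $R\in\mathrm{Sym}_\beta(2N)$, apply Itô's lemma, and take expectations. This yields the linear tensor ODE
\begin{equation*}
\frac{d\Sigma_t}{dt} = -K\Sigma_t - \Sigma_t K + Q,\qquad K:=(\beta N\Sigma^{(1,2)})^{-1},
\end{equation*}
where $Q$ is the per-unit-time covariance tensor of the diffusion term $N^{-1/2}\mathrm{diag}(dB^{(1)}_t,dB^{(2)}_t)$. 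A direct computation shows $Q$ coincides with the covariance tensor of $\mathrm{diag}(G^{(1)},G^{(2)})$ with $G^{(j)}$ independent GOE/GUE (i.e.\ it equals $2(\beta N)^{-1}$ on the block-diagonal subspace of $\mathrm{Sym}_\beta(2N)$ and vanishes off it), while $K\Sigma + \Sigma K = 2(\beta N)^{-1} P_{BD}$. Hence $\Sigma^{(1,2)}$ is a fixed point, and since $\Sigma_0=\Sigma^{(1,2)}$, uniqueness of solutions to the linear ODE gives $\Sigma_t=\Sigma^{(1,2)}$ for all $t\ge 0$.

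For part (2), I would exploit the explicit representation of the solution to the linear SDE,
\begin{equation*}
W^{(1,2)}_t = e^{-tK}W^{(1,2)}_0 + \mathcal{G}_t,\qquad \mathcal{G}_t:=N^{-1/2}\!\int_0^t e^{-(t-s')K}\,d\mathrm{diag}(B^{(1)}_{s'},B^{(2)}_{s'}),
\end{equation*}
where $\mathcal{G}_t$ is a centered Gaussian matrix, independent of $W^{(1,2)}_0$. A key structural observation is that Assumption \ref{ass:W-type}(iii) forces $\Sigma^{(1,2)}$ to be block-diagonal in the natural entry-wise basis indexed by pairs $\{(a,b):a\le b\}$ (each block being a $2\times 2$ matrix in the real case, coupling $w^{(1)}_{ab}$ with $w^{(2)}_{ab}$). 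This structure is inherited by $K$ and hence by $e^{-tK}$, so the resulting operator acts entry-by-entry and preserves the independence (up to symmetry) of the entries within each matrix. A direct computation then gives $\mathrm{Cov}(\mathcal{G}_t) = \int_0^t e^{-2uK}\,du \cdot 2(\beta N)^{-1}$ on the block-diagonal subspace.

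The Gaussian extraction proceeds as follows. Using Assumption~\ref{ass:W-type}(iii), $\Sigma^{(1,2)}\gtrsim N^{-1}\alpha$ yields $\|K\|\lesssim\alpha^{-1}$; therefore, for $t\in[0,\alpha]$ the operator $tK$ has universally bounded norm, so $e^{-2uK}\succeq c_*I_{BD}$ uniformly in $u\in[0,t]$. Integrating gives $\mathrm{Cov}(\mathcal{G}_t)\succeq c_* t\cdot 2(\beta N)^{-1}I_{BD}$, i.e.\ $\mathcal{G}_t$ dominates the covariance of $\sqrt{s}\,\mathrm{diag}(W^{(1)}_G,W^{(2)}_G)$ for $s=c_* t$ with independent GOE/GUE blocks. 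A standard Gaussian splitting then produces independent summands $\mathcal{G}_t=\sqrt{s}\,\mathrm{diag}(W^{(1)}_G,W^{(2)}_G) + Y$, and I would set $\widehat{W}^{(j)}_t := (e^{-tK}W^{(1,2)}_0)^{(j)} + Y^{(j)}$. Assumption \ref{ass:W-type}(ii) for $\widehat{W}^{(j)}_t$ (independence, centering, variance window and moment bounds) then follows from the entry-wise action of $K$, the independence of the Brownian increments, and the fact that subtracting the GOE/GUE variance $2s(\beta N)^{-1}$ from the preserved variances of $W^{(j)}_t$ leaves them in $[c_0'/N,C_0/N]$ provided $c_*$ is chosen small enough relative to $c_0$.

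The main technical obstacle is ensuring the Gaussian piece extracted from $\mathcal{G}_t$ has size $\sqrt{s}$ with $s$ comparable to $t$ (and not degraded by powers of $\alpha$); this is precisely the improvement captured in the heuristic (3.6) over the naive (3.5) and relies crucially on the interplay between the restriction $t\le\alpha$ and the lower bound $\Sigma^{(1,2)}\gtrsim N^{-1}\alpha$. A secondary issue is bookkeeping: one must verify that $\widehat{W}^{(j)}_t$ really inherits the Wigner-type structure required by Assumption~\ref{ass:W-type}(ii), for which the block-diagonal structure of $\Sigma^{(1,2)}$ in the entry basis is essential.
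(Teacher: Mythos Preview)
Your proposal is correct and follows essentially the same strategy as the paper's proof: both use the explicit Ornstein--Uhlenbeck solution, the block-diagonal (entry-wise $2\times 2$) structure of $\Sigma^{(1,2)}$ induced by Assumption~\ref{ass:W-type}(iii), and the bound $\|K\|\lesssim\alpha^{-1}$ to extract a Gaussian component of size $\sqrt{s}\sim\sqrt{t}$ for $t\le\alpha$. The only cosmetic difference is that the paper carries out the argument explicitly on each $2\times 2$ block by orthogonally diagonalizing $S^{ab}$ to decouple the two scalar OU processes, whereas you phrase the same computation at the operator level; the content is identical.
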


Note that the statement of Lemma \ref{lemma:match}(2) is valid only for $t\le\alpha$, meaning it holds only up to a relatively short time. This prevents performing a long-time comparison that would drive the initial pair of matrices to the purely Gaussian ones, as it was done in \cite{SchnelliXu23}. However this is not required for our purpose. We only need to run the GFT flow for a time exceeding $N^{-1}$, which can be chosen below $\alpha$ due to the bound $\alpha\ge N^{-1+\epsilon}$. 

Additionally to the properties listed in Lemma \ref{lemma:match}, the flow \eqref{eq:GFT_flow} preserves the block-diagonal structure of $W^{(1,2)}_t$, i.e.
\begin{equation*}
W^{(1,2)}_t =\mathrm{diag}\left(W^{(1)}_t,W^{(2)}_t\right)\quad\text{with}\quad W^{(j)}_t = \big(w^{(j)}_{t,ab}\big)_{a,b=1}^N,\,\, j=1,2,\,\, t\ge 0.
\end{equation*}
Since the only entries of $W^{(2)}$ that are correlated with $w^{(1)}_{ab}$ are $w^{(2)}_{ab}$ and $w^{(2)}_{ba}$, as stated in Assumption~\ref{ass:model}(iii), the GFT flow decomposes into $N(N+1)/2$ independent flows. Each of these flows couples $w^{(1)}_{t,ab}$ with $w^{(2)}_{t,ab}$ for some $1\le a\le b\le N$. For instance, in the real symmetric case these flows take the following form:
\begin{equation}
\dif \begin{pmatrix}
w^{(1)}_{t,ab}\\ w^{(2)}_{t,ab}
\end{pmatrix}
=-\frac{1+\delta_{ab}}{2N}S^{ab}\begin{pmatrix}
w^{(1)}_{t,ab}\\ w^{(2)}_{t,ab}
\end{pmatrix}\dif t
+\frac{1}{\sqrt{N}}\begin{pmatrix}
\dif B^{(1)}_{t,ab}\\ \dif B^{(2)}_{t,ab}
\end{pmatrix},
\label{eq:flow_entr}
\end{equation}
where $S^{ab}$ is the inverse of the covariance matrix of $w_{ab}^{(1)}$ and $w_{ab}^{(2)}$, i.e.
\begin{equation}
S^{ab}=
\begin{pmatrix}
s^{ab}_{11}&s^{ab}_{12}\\s^{ab}_{21}&s^{ab}_{22}
\end{pmatrix}
=\begin{pmatrix}
\E |w_{ab}^{(1)}|^2&\E w_{ab}^{(1)}w_{ab}^{(2)}\\
\E w_{ab}^{(1)}w_{ab}^{(2)}&\E |w_{ab}^{(2)}|^2
\end{pmatrix}^{-1}.
\label{eq:def_S_mat}
\end{equation}
In the complex Hermitian case the flow for $w_{ab}^{(1)}$ and $w_{ab}^{(2)}$ splits further into two independent flows, separately governing the real and imaginary parts of these entries.

The time-dependence of $W^{(1,2)}_t$ induces a corresponding time-dependence in the filtered signal
\begin{equation}
H^{(j)}_t:= A^{(j)}+\Phi^{(j)}\left[W^{(j)}_t\right],\quad j=1,2.
\label{eq:Ht}
\end{equation}
We further denote the joint eigenvalue correlation functions of $H^{(1)}_t$ and $H^{(2)}_t$ by $p^{(1,2)}_{t,m,n}$. The following continuity statement provides an upper bound on $p^{(1,2)}_{t,m,n}-p^{(1,2)}_{0,m,n}$ in the weak sense.

\begin{proposition}[GFT]\label{prop:p_comparison} Fix a (small) $\delta>0$. Under the assumptions and with notations from Theorem \ref{theo:main}, we have
\begin{equation}
\begin{split}
&\int_{\R^{m+n}} F({\bm x},{\bm y})\left(p^{(1,2)}_{t,m,n}\left(E^{(1)}+\frac{\bm{x}}{N\rho^{(1)}(E^{(1)})}, E^{(2)}+\frac{\bm{y}}{N\rho^{(2)}(E^{(2)})} \right)\right.\\
&\qquad \left.- p^{(1,2)}_{0,m,n}\left(E^{(1)}+\frac{\bm{x}}{N\rho^{(1)}(E^{(1)})}, E^{(2)}+\frac{\bm{y}}{N\rho^{(2)}(E^{(2)})}\right)\right)\dif\bm{x}\dif\bm{y} = \mathcal{O}\big(N^{-c(m,n)}\big)
\end{split}
\label{eq:p_comparison}
\end{equation}
uniformly in $0\le t\le \alpha^{1/2}N^{-1/2-\delta}$, where $c(m,n)>0$ depends only on $m,\,n$ and $\delta$, and the implicit constant in $\mathcal{O}$ depends on $F$.
\end{proposition}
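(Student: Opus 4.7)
The plan is the standard Green function comparison strategy, tailored to the degenerate flow \eqref{eq:GFT_flow}. Using the Helffer--Sj\"ostrand functional calculus together with the bulk local law in Proposition \ref{prop:loc_law}, the left hand side of \eqref{eq:p_comparison} can be rewritten, up to admissible smoothing errors of size $N^{-c(m,n)}$, as the difference $\E[\Psi_t]-\E[\Psi_0]$ where
\begin{equation*}
\Psi_t:=\int_{\R^{m+n}}F(\bm x,\bm y)\prod_{i=1}^{m}\bigl\langle\Im G^{(1)}_t(z_i)\bigr\rangle\prod_{j=1}^n\bigl\langle\Im G^{(2)}_t(\zeta_j)\bigr\rangle\,\dif\bm x\,\dif\bm y,
\end{equation*}
with $G^{(j)}_t(\cdot):=(H^{(j)}_t-\cdot)^{-1}$, $z_i:=E^{(1)}+x_i/(N\rho^{(1)}(E^{(1)}))+\ii\eta$, $\zeta_j:=E^{(2)}+y_j/(N\rho^{(2)}(E^{(2)}))+\ii\eta$, and $\eta$ a small auxiliary scale just below $N^{-1}$, propagated from the local-law scale $N^{-1+\epsilon}$ by the standard monotonicity trick. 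It therefore suffices to show $|\E[\Psi_t]-\E[\Psi_0]|\le N^{-c(m,n)}$ uniformly in $t\in[0,\alpha^{1/2}N^{-1/2-\delta}]$.

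\textbf{It\^o's formula and the cancellation from Lemma \ref{lemma:match}(1).} Differentiating $\E[\Psi_t]$ in $t$ via It\^o's formula applied to \eqref{eq:GFT_flow} produces a singular drift contribution and a Brownian diffusion contribution. A multivariate cumulant (generalised Stein) expansion of the drift term in the centred entries of $W^{(1,2)}_t$ yields, at leading order, an expression in which the inverse covariance $(\Sigma^{(1,2)})^{-1}$ from the drift is paired with the true covariance $\Sigma^{(1,2)}$ of the entries; these collapse into the identity, leaving precisely $-(2N)^{-1}\E[\mathrm{tr}(\nabla_W^2\Psi_t)]$, which cancels the It\^o diffusion exactly. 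This is the analytical incarnation of Lemma \ref{lemma:match}(1): the flow was designed so that no second-cumulant error is generated. Consequently
\begin{equation*}
\frac{\dif}{\dif t}\E[\Psi_t]=\sum_{k\ge 2}\mathcal{R}_k(t),
\end{equation*}
where $\mathcal{R}_k(t)$ gathers the contributions involving the $(k{+}1)$-point joint cumulants of the pairs $(w^{(1)}_{ab},w^{(2)}_{ab})$; by \eqref{eq:p_moment} these cumulants are $O(N^{-(k+1)/2})$.

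\textbf{Controlling the remainder via \eqref{eq:drift}; main obstacle.} Each derivative $\partial_{w_{ab}^{(j)}}\Psi_t$ produces, through the chain rule applied to $H^{(j)}_t=A^{(j)}+\Phi^{(j)}[W^{(j)}_t]$, an additional resolvent factor $G^{(j)}_t\,\Phi^{(j)}[E_{ab}]\,G^{(j)}_t$. After summation over the relevant index directions this contribution is controlled, up to innocuous $N^{O(\xi)}$ factors, by the local law \eqref{eq:loc_law}, the locality of $\Phi^{(j)}$ in \eqref{eq:Phi_decay}, and integration by parts in the spectral variables $x_i,y_j$ against the smooth weight $F$. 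The genuine difficulty is the weighting $(\Sigma^{(1,2)})^{-1}$ inherited from the drift, whose operator norm can reach $N/\alpha$: a pointwise estimate would be catastrophic in the regime $\alpha\ge N^{-1+\epsilon}$ of interest. The remedy is to symmetrically redistribute the weight as $(\Sigma^{(1,2)})^{-1}=(\Sigma^{(1,2)})^{-1/2}(\Sigma^{(1,2)})^{-1/2}$, equivalently to rephrase the expansion in terms of the isotropised matrix $X_t^{(1,2)}:=(\Sigma^{(1,2)})^{-1/2}[W^{(1,2)}_t]$ whose entries have order-one variances. This is precisely the manoeuvre that turns the naive bound \eqref{eq:drift_naive} into the refined bound \eqref{eq:drift}, saving a factor of $\sqrt{\alpha}$ per occurrence of $(\Sigma^{(1,2)})^{-1}$. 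Carrying this through the cumulant expansion, the dominant third-cumulant remainder is bounded by $N^{O(\xi)}\alpha^{-1/2}N^{1/2}$, and higher cumulants are further suppressed by additional factors of $N^{-1/2}$. Integrating over $t\in[0,\alpha^{1/2}N^{-1/2-\delta}]$ yields $|\E[\Psi_t]-\E[\Psi_0]|\lesssim N^{-\delta+O(\xi)}$, which gives \eqref{eq:p_comparison} upon choosing $\xi\ll\delta$. The main obstacle, absent from standard GFT arguments, is the uniform execution of this $\sqrt{\alpha}$-bookkeeping across every cumulant order and every combinatorial pairing of derivatives with $(\Sigma^{(1,2)})^{\pm 1/2}$ weights, especially for observables that genuinely couple $H^{(1)}$ and $H^{(2)}$; the filtering operators $\Phi^{(j)}$ enter only as bounded, local rank-one directions by Assumption \ref{ass:model} and pose no substantial additional difficulty.
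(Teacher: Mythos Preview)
Your proposal is essentially correct and follows the same strategy as the paper: reduce to a Green function comparison for products of $\langle\Im G^{(j)}_t\rangle$, differentiate along the flow \eqref{eq:GFT_flow}, expand the drift via the multivariate cumulant expansion, and observe that the second-order terms cancel exactly against the It\^o diffusion (this is the content of Lemma~\ref{lemma:match}(1)).

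The one point where your description diverges slightly from the paper concerns how the $\sqrt{\alpha}$-saving is actually obtained and where it is needed. You phrase it as splitting $(\Sigma^{(1,2)})^{-1}=(\Sigma^{(1,2)})^{-1/2}(\Sigma^{(1,2)})^{-1/2}$ and carrying the bookkeeping through \emph{every} cumulant order. The paper's execution is more concrete and more localized: only the third-cumulant term ($l=2$ in \eqref{eq:R_expanded}) requires the improvement. There the key observation is that for centered variables the third joint cumulant equals the third joint moment, so the weighted combination $s^{ab}_{j1}\kappa^{(p+1,q)}_{t,ab}+s^{ab}_{j2}\kappa^{(p,q+1)}_{t,ab}$ is literally $\E\big[(w^{(1)}_{t,ab})^p(w^{(2)}_{t,ab})^q\,\bm e_j^*S^{ab}\bm w_{t,ab}\big]$, and a single Cauchy--Schwarz together with $\E\,|\bm e_j^*S^{ab}\bm w_{t,ab}|^2=(S^{ab})_{jj}\lesssim N/\alpha$ gives the bound $N^{-1}\sqrt{N/\alpha}$ (this is \eqref{eq:l=2_cancel}). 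For all higher orders $l\ge 3$ the paper simply uses the naive estimate $|s^{ab}_{jk}|\lesssim N/\alpha$ together with $|\kappa^{(l+1)}|\lesssim N^{-(l+1)/2}$, which already yields $\alpha^{-1}N^{-(l-3)/2}\lesssim\sqrt{N/\alpha}$ thanks to $\alpha\gtrsim N^{-1}$. So the ``uniform execution across every cumulant order'' you flag as the main obstacle is not actually required; the delicate step is isolated at the third cumulant, and your $\Sigma^{-1/2}$ heuristic is precisely the variance identity $\E|S^{ab}\bm w_{t,ab}|^2=\mathrm{Tr}\,S^{ab}$ used there.
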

\noindent The proof of Proposition \ref{prop:p_comparison} is presented in Section \ref{sec:GFT} and relies on the multivariate cumulant expansion \cite[Proposition 3.2]{slow_corr}. While Lemma \ref{lemma:match}(1) ensures the cancellation of the terms involving second order cumulants, the flow \eqref{eq:GFT_flow} also provides a powerful partial cancellation for the third order cumulants, see later in \eqref{eq:l=2_cancel}. This additional cancellation allows us to run the comparison process \eqref{eq:GFT_flow} up to the time $t=\alpha^{1/2}N^{-1/2-\delta}$.

In order to complete the proof of Theorem \ref{theo:main}, we bring Propositions \ref{prop:DBM} and \ref{prop:p_comparison} together. Take $t_0:=N^{-1+\epsilon/4}$. By Lemma \ref{lemma:match}, $W^{(j)}_{t_0}$ for $j=1,2$ admits the representation of the form \eqref{eq:g_component} with $s\sim t_0$. Consequently, Proposition \ref{prop:DBM} ensures that \eqref{eq:goal} holds for $p^{(1,2)}_{t_0,m,n}$. Moreover, since $t_0\sqrt{N/\alpha}\le N^{-\epsilon/4}$, using Proposition \ref{prop:p_comparison} for $p^{(1,2)}_{t_0,m,n}$, $p^{(1,2)}_{t_0,m,0}$ and $p^{(1,2)}_{t_0,0,n}$ we conclude the proof of Theorem \ref{theo:main}.

\section{Gaussian divisible case: Proof of Proposition \ref{prop:DBM}}\label{sec:GD}

First of all, we write $H^{(j)}$, $j=1,2$, in the form
\begin{equation}
H^{(j)}\!\!=\widehat{H}^{(j)}+\sqrt{t}W_G^{(j)}\quad \text{with}\quad\widehat{H}^{(j)}\!\!:= A^{(j)}+\Phi^{(j)} \big[ \widehat{W}^{(j)}\big] +\sqrt{s}\left(\big(\Phi^{(j)}\big)^2-(c_0/2)^2I\right)^{1/2}\!\!\big[ W^{(j)}_{1,G}\big],
\label{eq:H_divisible}
\end{equation}
$W_G^{(j)}:=W_{2,G}^{(j)}$ and $t:=c_0s/2$. Here the GUE/GOE matrices $\lbrace W^{(j)}_{i,G}\rbrace_{i,j=1}^2$ are independent, and the operator $I$ acts identically on $\mathrm{Sym}_\beta(N)$. The existence of this representation follows from the bound $\Phi^{(j)}\ge c_0$ given in Assumption~\ref{ass:model}.

The key tool for proving Proposition \ref{prop:DBM} is the fixed energy universality of the \emph{Dyson Brownian motion} established in \cite[Theorem~2.2]{fixed_E}. Informally, this result asserts that the local spectral statistics of a matrix $H+\sqrt{\tau}W_G$ is essentially determined by the Gaussian matrix $W_G$, provided that $\tau\gg 1/N$ and the eigenvalue distribution of $H$ is sufficiently regular. Notably, this holds even when $H$ is deterministic. To apply \cite[Theorem~2.2]{fixed_E}, we need the inputs summarized in the following lemma, which is proven later in Appendix \ref{app:free_conv}.

\begin{lemma}\label{lem:cond_rho} Fix $\kappa>0$ and let $j\in [2]$. Denote by $\widehat{\rho}^{(j)}$ the scDoS of $\widehat{H}^{(j)}$ defined in \eqref{eq:H_divisible}, and let $\widehat{\mathbf{B}}_\kappa^{(j)}$ be the corresponding $\kappa$-bulk. Under the assumptions of Proposition \ref{prop:DBM} the following holds.
\begin{enumerate}
\item We have $\mathbf{B}_\kappa^{(j)}\subset \widehat{\mathbf{B}}_{\kappa/2}^{(j)}$.
\item The local law stated in Proposition \ref{prop:loc_law} holds for the matrix $\widehat{H}^{(j)}$.
\item Let us condition on $\widehat{H}^{(j)}$ and interpret $H^{(j)}$ as a sum of a deterministic matrix and a Gaussian perturbation $\sqrt{t}W_G^{(j)}$. Let $\rho_c^{(j)}$ be the corresponding scDoS, which is a random variable defined on the probability space of $\widehat{H}^{(j)}$. Then for any $E\in\mathbf{B}_\kappa^{(j)}$ we have
\begin{equation}
\left\vert \rho_c^{(j)}(E)-\rho^{(j)}(E)\right\vert\prec \frac{1}{Nt}+t.
\label{eq:cond_rho}
\end{equation}
\end{enumerate}
\end{lemma}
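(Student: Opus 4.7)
}

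The proof proceeds by exploiting that the self-energy operators of $H^{(j)}$ and $\widehat{H}^{(j)}$ differ by an explicit Gaussian contribution. Independence of $W_G^{(j)}$ from the remaining ingredients of $\widehat{H}^{(j)}$ gives $\mathcal{S}^{(j)}=\widehat{\mathcal{S}}^{(j)}+t\,\mathcal{S}_G$, where $\mathcal{S}_G$ is the GUE/GOE self-energy (so that $\mathcal{S}_G[R]=\langle R\rangle I$ in the complex case, and the analogous bounded operator in the real case). All three assertions follow from this identity combined with stability of the MDE.

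For part (2), observe that $\widehat{H}^{(j)}$ falls in the same class as $H^{(j)}$: the random piece is a sum of the filtered Wigner-type matrix $\Phi^{(j)}[\widehat{W}^{(j)}]$ and the independent contribution $\sqrt{s}((\Phi^{(j)})^2-(c_0/2)^2 I)^{1/2}[W_{1,G}^{(j)}]$, whose combined covariance tensor is non-negative, flat, and preserves the decay bound \eqref{eq:Phi_decay} (bounded analytic functions of $\Phi^{(j)}$ inherit comparable decay, hence so does the square root). Therefore the derivation of Proposition~\ref{prop:loc_law} from \cite[Theorem~2.8]{cusp_univ} in Appendix~\ref{app:loc_law} applies to $\widehat{H}^{(j)}$ verbatim.

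For part (1), invoke bulk stability of the MDE \cite{MDEreview}: for $z$ with $\Re z\in \mathbf{B}_\kappa^{(j)}$ and $\Im z>0$, both $M^{(j)}$ and $\widehat{M}^{(j)}$ are uniformly bounded and the associated stability operators are uniformly invertible. Since $\mathcal{S}^{(j)}-\widehat{\mathcal{S}}^{(j)}=t\,\mathcal{S}_G$ has norm $O(t)$, a standard perturbation argument yields $\|M^{(j)}(z)-\widehat{M}^{(j)}(z)\|\lesssim t$, whence $|\rho^{(j)}(E)-\widehat{\rho}^{(j)}(E)|\lesssim t\le N^{-\sigma}$, and the containment $\mathbf{B}_\kappa^{(j)}\subset\widehat{\mathbf{B}}_{\kappa/2}^{(j)}$ holds for $N$ large.

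Part (3) is the main technical step. Conditioning on $\widehat{H}^{(j)}$, the MDE for $M_c^{(j)}$ becomes
\begin{equation*}
-M_c^{(j)}(z)^{-1}=z-\widehat{H}^{(j)}+t\,\mathcal{S}_G\bigl[M_c^{(j)}(z)\bigr].
\end{equation*}
In the complex case $\mathcal{S}_G[R]=\langle R\rangle I$, so the substitution $\zeta=z+t\,m_c^{(j)}(z)$ with $m_c^{(j)}:=\langle M_c^{(j)}\rangle$ reduces this to the scalar fixed-point equation
\begin{equation*}
m_c^{(j)}(z)=\widehat{g}^{(j)}\bigl(z+t\,m_c^{(j)}(z)\bigr),\qquad \widehat{g}^{(j)}(\zeta):=\tfrac{1}{N}\mathrm{Tr}\bigl(\widehat{H}^{(j)}-\zeta\bigr)^{-1}.
\end{equation*}
The same manipulation applied to the \emph{unconditional} MDE for $H^{(j)}$ yields the analogous equation with $\widehat{g}^{(j)}$ replaced by $\widehat{m}^{(j)}:=\langle \widehat{M}^{(j)}\rangle$, whose solution is exactly $m^{(j)}=\langle M^{(j)}\rangle$. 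Subtracting, feeding in the averaged local law from part (2) in the form $|\widehat{g}^{(j)}(\zeta)-\widehat{m}^{(j)}(\zeta)|\prec 1/(N|\Im \zeta|)$, and using the bulk bound $\Im\zeta\gtrsim t\,\Im m^{(j)}\gtrsim t$ (which is ensured by part (1)), one closes a contraction/Grönwall estimate yielding $|m_c^{(j)}(z)-m^{(j)}(z)|\prec 1/(Nt)$. Extracting imaginary parts and sending $\Im z\downarrow 0$ (safe in the bulk at spectral scale $t\gg N^{-1}$), and combining with the $O(t)$ discrepancy from part (1), produces \eqref{eq:cond_rho}. The main delicate point is closing this free-convolution contraction at the optimal scale $\Im\zeta\sim t$, just above the threshold where the local law from part~(2) is effective; the real symmetric case introduces only a minor adjustment since $\mathcal{S}_G$ is rank-two rather than rank-one, and is handled by the same scheme.
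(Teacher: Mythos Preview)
Your proposal is correct and essentially matches the paper's proof in Appendix~\ref{app:free_conv}: part~(1) via MDE perturbation (the paper simply cites \cite[Proposition~10.1(b)]{shape}), part~(2) by verifying the hypotheses of \cite[Theorem~2.8]{cusp_univ} for $\widehat{H}^{(j)}$, and part~(3) via the free-convolution subordination relation $m_c(z)=\widehat{m}_N(z+tm_c(z))$ from \cite{Biane} combined with the averaged local law from part~(2) at the scale $\Im\zeta\sim t$. The only cosmetic difference is that in part~(3) you exploit the parallel identity $m^{(j)}(z)=\widehat{m}^{(j)}(z+tm^{(j)}(z))$ to compare $m_c$ directly with $m$, whereas the paper first compares $m_c$ with $\widehat{m}$ using a Lipschitz bound on $\widehat{m}$ (citing \cite[Proposition~4.7]{shape}) and then implicitly absorbs $|\widehat{m}-m|\lesssim t$ into the $O(t)$ term.
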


\begin{proof}[Proof of Proposition \ref{prop:DBM}] For simplicity, we restrict the proof to the real symmetric case, as the argument remains identical in the complex Hermitian setting. Let $F\in C^\infty_c\left(\R^{m+n}\right)$ be a real-valued test function. Throughout the proof implicit constants may depend on $F$, however they appear only in sub-exponential factor. Additionally, we use $\varepsilon>0$ to denote an $N$-independent constant which can be chosen arbitrarily small, and $c>0$ to represent a universal constant, whose values may vary from line to line. 

Recalling the definition of $p^{(1,2)}_{m,n}$ from \eqref{eq:joint_cor_func} we get that
\begin{equation}
\begin{split}
&\int_{\R^{m+n}} F({\bf x},{\bf y}) p^{(1,2)}_{m,n}\left(E^{(1)}+\frac{{\bf x}}{\rho^{(1)}(E^{(1)})N}, E^{(2)}+\frac{{\bf y}}{\rho^{(2)}(E^{(2)})N}\right)\dif {\bf x}\dif {\bf y}\\
&\quad = C^{(1)}_{m}C^{(2)}_{n} \E\left[ \sum_{\,\,{\bm \lambda}^{(1)}\!,\, \bm{\lambda}^{(2)}} F\left(({\bm \lambda}^{(1)}-E^{(1)})N\rho^{(1)}(E^{(1)}), ({\bm \lambda}^{(2)}-E^{(2)})N\rho^{(2)}(E^{(2)}) \right)\right],
\end{split}
\label{eq:id_lhs}
\end{equation}
where we abbreviated 
\begin{equation*}
C^{(j)}_{k}:= \frac{(N-k)!N^k}{N!}\rho^{(j)}(E^{(j)})^{k}\quad \text{and}\quad {\bm \lambda}^{(1)}:=(\lambda_{i_1}^{(1)},\ldots, \lambda_{i_m}^{(1)})\in\R^{m},\, {\bm \lambda}^{(2)}:=(\lambda_{j_1}^{(2)},\ldots, \lambda_{j_n}^{(2)})\in\R^n.
\end{equation*}
The summation over $\lambda^{(1)}$ (respectively, over $\lambda^{(2)}$) in the rhs. of \eqref{eq:id_lhs} runs over all distinct indices $i_1,\ldots,i_m\in[N]$ (respectively, $j_1,\ldots,j_n\in[N]$), and the expectation is taken wrt. $(H^{(1)}, H^{(2)})$. We note that the normalizing factors $C^{(1)}_m$, $C^{(2)}_n$ are of order one.

We show that $\rho^{(j)}$ can be replaced by $\rho^{(j)}_c$ in the rhs. of \eqref{eq:id_lhs} by a cost of a small error term. Since $E^{(1)}$ lies in the $\kappa$-bulk of $\rho^{(j)}$, we have $\rho^{(j)}(E^{(j)})\ge \kappa$, and by Lemma \ref{lem:cond_rho}(3), $\rho^{(j)}_c(E^{(j)})\ge \kappa/2$ with very high probability. Moreover, since $F$ is compactly supported, only pairs $(\bm{\lambda}^{(1)},\bm{\lambda}^{(2)})$ satisfying $\lVert\bm{\lambda}^{(j)}-E^{(j)}\rVert\lesssim N^{-1}$ for $j=1,2$ contribute to this sum, as well as to the analogous sum where $\rho^{(j)}$ is replaced by $\rho_c^{(j)}$. By Proposition \ref{prop:loc_law}, $H^{(j)}$ satisfies the bulk local law, implying that the bulk eigenvalue rigidity from \cite[Corollary 2.5]{slow_corr} holds for $H^{(j)}$. Consequently, the number of terms contributing to the rhs. of \eqref{eq:id_lhs} is at most $N^\varepsilon$ with very high probability. Applying \eqref{eq:cond_rho} from Lemma \ref{lem:cond_rho} and recalling that $F$ is smooth, we conclude that the rhs. of \eqref{eq:id_lhs} equals to
\begin{equation}
C^{(1)}_m C^{(2)}_n \E\left[ \sum_{\,\,{\bm \lambda}^{(1)}\!,\, \bm{\lambda}^{(2)}}\!\! F\left(({\bm \lambda}^{(1)}-E^{(1)})N\rho_c^{(1)}(E^{(1)}), ({\bm \lambda}^{(2)}-E^{(2)})N\rho_c^{(2)}(E^{(2)}) \right)\right]\! +\! \mathcal{O}\left(N^{-c}\right).
\label{eq:rho_to_rho_fc}
\end{equation}

Next we condition on $\widehat{H}^{(1)}, \widehat{H}^{(2)}$ and $W^{(2)}_G$ in \eqref{eq:rho_to_rho_fc}, leaving the randomness only in $W^{(1)}_G$. By Lemma \ref{lem:cond_rho}(1), $E^{(1)}$ lies in the $\kappa/2$-bulk of $\widehat{H}^{(j)}$. Thus, applying Lemma \ref{lem:cond_rho}(2), we see that $\widehat{H}^{(j)}$ satisfies the local law \eqref{eq:loc_law} for spectral parameter $z_j$ with $\Re z_j\in (E^{(1)}-\varepsilon, E^{(1)}+\varepsilon)$. This implies that $\widehat{H}^{(1)}-E^{(1)}$ is $(N^{-1+\varepsilon},N^{-\varepsilon})$-regular in the sense of \cite[Definition 2.1]{fixed_E}. Therefore, for any fixed $\bm{\lambda}^{(2)}$ which contributes to the sum in \eqref{eq:rho_to_rho_fc}, the fixed energy universality of the Dyson Brownian motion from \cite[Theorem 2.2]{fixed_E} yields
\begin{equation}
\begin{split}
&\E\!\left[\E\! \left[C^{(1)}_m\sum_{\,\,\,{\bm \lambda}^{(1)}}\! F\left(({\bm \lambda}^{(1)}-E^{(1)})N\rho_c^{(1)}(E^{(1)}), ({\bm \lambda}^{(2)}-E^{(2)})N\rho_c^{(2)}(E^{(2)}) \right) \,\big|\, \widehat{H}^{(1)}\!,\, \widehat{H}^{(2)}\!,\, W^{(2)}_G  \right]\right]\\
&\quad = \E\int_{\R^m} F\left({\bf x}, ({\bm \lambda}^{(2)}-E^{(2)})N\rho_c^{(2)}(E^{(2)}) \right) p_m^{{\rm GOE}}\left(\frac{\bf{x}}{\rho_{\rm{sc}}(0)}\right) \dif {\bf x} + \mathcal{O}\left(N^{-c}\right),
\end{split}
\label{eq:1st_appl}
\end{equation}
where $p_m^{{\rm GOE}}$ denotes the $m$-point correlation function for the GOE. Now we sum \eqref{eq:1st_appl} over ${\bm \lambda}^{(2)}$. By the bulk rigidity of $\widehat{H}^{(2)}$ (as discussed above \eqref{eq:rho_to_rho_fc}), which implies that the number of nonzero terms in this sum is at most $N^\varepsilon$, we conclude that the error term $\mathcal{O}(N^{-c})$ in \eqref{eq:1st_appl} contributes at most $\mathcal{O}(N^{-c/2})$ to the sum. After conditioning on $\widehat{H}^{(2)}$ and applying \cite[Theorem~2.2]{fixed_E} for each fixed ${\bf{x}}\in\R^m$, we obtain that the sum in \eqref{eq:rho_to_rho_fc} equals to
\begin{equation*}
\mathcal{I}[F]:= \int_{\R^{m+n}} F({\bf x},{\bf y})p_m^{{\rm GOE}}\left(\frac{\bf{x}}{\rho_{\rm{sc}}(0)}\right)p_n^{{\rm GOE}}\left(\frac{\bf{y}}{\rho_{\rm{sc}}(0)}\right)\dif {\bf x}\dif {\bf y}
\end{equation*}
up to an error term $\mathcal{O}(N^{-c})$. Combining this with \eqref{eq:id_lhs}, we arrive to
\begin{equation}
\int_{\R^{m+n}} F({\bf x},{\bf y}) p^{(1,2)}_{m,n}\left(E^{(1)}+\frac{{\bf x}}{\rho^{(1)}(E^{(1)})N}, E^{(2)}+\frac{{\bf y}}{\rho^{(2)}(E^{(2)})N}\right)\dif {\bf x}\dif {\bf y}=\mathcal{I}[F]+\mathcal{O}\left(N^{-c}\right).
\label{eq:id1}
\end{equation}

In order to complete the proof of Proposition \ref{prop:DBM} it remains to show that
\begin{equation}
\int_{\R^{m+n}}\!\!\! F({\bf x},{\bf y}) p_m^{(1)}\!\left(\!E^{(1)}+\frac{{\bf x}}{\rho^{(1)}(E^{(1)})N}\right) p_n^{(2)}\!\left(\!E^{(2)}+\frac{{\bf y}}{\rho^{(2)}(E^{(2)})N}\right) \dif {\bf x}\dif {\bf y} = \mathcal{I}[F]+\mathcal{O}\left(N^{-c}\right).
\label{eq:id2}
\end{equation}
Let $S_1\subset \R^m$ and $S_2\subset \R^n$ be the projections of ${\rm supp}(F)\subset \R^m\times\R^n$ on the first $m$ and last $n$ coordinates, respectively. Denote the characteristic function of $S_j$ by $\mathcal{X}_{S_j}$. Since ${\rm supp}(F)$ is compact, $S_j$ is also compact, implying that $\mathcal{X}_{S_j}$ has finite integral. Firstly we show that $p_m^{(1)}$ can be replaced by $p_m^{{\rm GOE}}$ in the lhs. of \eqref{eq:id2} at the cost of an error term $\mathcal{O}(N^{-c})$. Following a similar argument as in the proof of \eqref{eq:id1}, we get
\begin{equation}
\int_{\R^m}\!\!\!\!F({\bf x},{\bf y}) p_m^{(1)}\!\left(E^{(1)}\!+\frac{{\bf x}}{\rho^{(1)}(E^{(1)})N}\right)\! \dif{\bf x} =\!\! \int_{\R^m}\!\!\!\!F({\bf x},{\bf y}) p_m^{{\rm GOE}}\!\left(\frac{{\bf x}}{\rho_{{\rm sc}}(0)N}\right)\! \dif{\bf x} + \mathcal{X}_{S_1}(y)\mathcal{O}\left(N^{-c}\right).
\label{eq:k_1_to_GOE}
\end{equation}
Integrating \eqref{eq:k_1_to_GOE} over ${\bf y}$ and applying the same reasoning to $p_n^{(2)}$, we conclude the proof of \eqref{eq:id2}.
\end{proof}

\section{GFT: proof of Proposition \ref{prop:p_comparison}}\label{sec:GFT}

We begin by observing the continuity of Green's functions along the flow \eqref{eq:GFT_flow}. 

\begin{proposition}[Green function comparison]\label{prop:GFT}
Fix (small) $\kappa>0$ and $m, n\in\N\cup\{0\}$. For a (small) $\xi>0$ take $z^{(1)}_p=E^{(1)}_p+\ii N^{-1-\xi}$, $p\in[m]$, and $z^{(2)}_q=E^{(2)}_q+\ii N^{-1-\xi}$, $q\in[n]$, with $E^{(1)}_p$ and $E^{(2)}_q$ in the $\kappa$-bulk of $H^{(1)}$ and $H^{(2)}$ respectively. Recall the notation $H_t^{(j)}$ from \eqref{eq:Ht} and denote $G^{(j)}_t(z):=(H^{(j)}_t-z)^{-1}$ for $j=1,2$. We set
\begin{equation}
R_t:=\prod_{p=1}^{m} \langle \Im G^{(1)}_t(z^{(1)}_p)\rangle\prod_{q=1}^{n} \langle\Im G^{(2)}_t(z^{(2)}_q)\rangle.
\label{eq:Rt}
\end{equation}
Then uniformly in $t\in[0,1]$ it holds that
\begin{equation}
\left\vert\E R_t - \E R_0\right\vert \lesssim \sqrt{\frac{N}{\alpha}}N^{C\xi} t
\end{equation}
for some constant $C>0$ which depends only on $\kappa, m$ and $n$.
\end{proposition}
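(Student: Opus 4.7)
The plan is to compute $\frac{\dif}{\dif t}\E R_t$ via Itô's formula applied to the SDE \eqref{eq:GFT_flow}, and then to process the resulting drift contribution with the multivariate cumulant expansion of \cite[Proposition 3.2]{slow_corr}. Writing $R_t = f(W^{(1,2)}_t)$, Itô produces a drift piece $-\frac{1}{\beta N}\E\langle\nabla_W R_t,\,\Sigma^{-1}[W_t]\rangle$ together with a diffusion piece coming from the quadratic variation of $\mathrm{diag}(\dif B^{(1)},\dif B^{(2)})$. Each derivative $\partial_{w^{(j)}_{ab}}$ acts on $R_t$ through $\partial_{w^{(j)}_{ab}} G^{(j)} = -G^{(j)}\Phi^{(j)}[E_{ab}]G^{(j)}$, so products of derivatives of $R_t$ are controlled by combining the isotropic and averaged bulk local laws from Proposition \ref{prop:loc_law} with the spatial decay \eqref{eq:Phi_decay} of $\Phi^{(j)}[E_{ab}]$.

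Applying the cumulant expansion to the drift gives
\begin{equation*}
\E\langle\nabla_W R_t,\,\Sigma^{-1}[W_t]\rangle = \sum_{l\ge 1}\frac{1}{l!}\sum_{\bm\alpha,\bm\beta_0,\ldots,\bm\beta_l}(\Sigma^{-1})_{\bm\alpha\bm\beta_0}\,\kappa_{l+1}(W_{\bm\beta_0},\ldots,W_{\bm\beta_l})\,\E\big[\partial_{\bm\alpha}\partial_{\bm\beta_1}\cdots\partial_{\bm\beta_l}R_t\big].
\end{equation*}
The $l=1$ term telescopes to $\sum_{\bm\alpha}\E[\partial_{\bm\alpha}^2 R_t]$ via $\Sigma^{-1}\Sigma = I$ and is cancelled exactly by the Itô diffusion; this matching is the structural purpose of the drift in \eqref{eq:GFT_flow}, equivalently of Lemma \ref{lemma:match}(1), and removes what would otherwise be an unusable factor $\|\Sigma^{-1}\|\lesssim N/\alpha$.

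The main obstacle is the $l=2$ (third cumulant) contribution. By Assumption \ref{ass:W-type}(iii), $\kappa_3(W_{\bm\beta_0},W_{\bm\beta_1},W_{\bm\beta_2})$ vanishes unless the three index-pairs all share the same unordered pair $\{a,b\}$, so the entire sum localises onto the $2\times 2$ inverse covariance $S^{ab}$ from \eqref{eq:def_S_mat}. The flow \eqref{eq:GFT_flow} is designed so that after contracting $\Sigma^{-1}$ against the third-cumulant tensor only one factor of $\|S^{ab}\|^{1/2}\sim\sqrt{N/\alpha}$ survives instead of the full $\|S^{ab}\|\sim N/\alpha$; this is the partial cancellation to be encoded in the forthcoming identity \eqref{eq:l=2_cancel}. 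Combined with $|\kappa_3|\lesssim N^{-3/2}$ from \eqref{eq:p_moment}, an $N^2$ summation over $\{a,b\}$, and a three-derivative estimate on $R_t$ via the local law at spectral resolution $N^{-1-\xi}$, the $l=2$ contribution is bounded by $\sqrt{N/\alpha}\,N^{C\xi}$. For $l\ge 3$ the faster decay $|\kappa_{l+1}|\lesssim N^{-(l+1)/2}$ against only a single factor of $\Sigma^{-1}$ renders each higher-order contribution subleading. Gathering all the estimates yields $\big|\tfrac{\dif}{\dif t}\E R_t\big|\lesssim \sqrt{N/\alpha}\,N^{C\xi}$ uniformly in $t\in[0,1]$, and integration in $t$ gives the claim; the genuinely new ingredient compared with the standard GFT argument is precisely the $l=2$ cancellation.
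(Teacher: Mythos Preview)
Your proposal is correct and follows essentially the same approach as the paper: differentiate $\E R_t$ via It\^{o}'s formula, apply the multivariate cumulant expansion to the drift, observe the exact cancellation at $l=1$ against the diffusion term (this is precisely the paper's computation \eqref{eq:coeff_l=1}--\eqref{eq:kappa_S}), exploit the partial cancellation at $l=2$ encoded in \eqref{eq:l=2_cancel} to gain a factor $\sqrt{\alpha/N}$, and bound the $l\ge 3$ terms crudely using \eqref{eq:S_bound} and the moment decay \eqref{eq:p_moment}. One minor caveat: for $l=3$ the resulting bound is $\alpha^{-1}N^{C\xi}$, which is of the \emph{same} order as $\sqrt{N/\alpha}\,N^{C\xi}$ in the extremal regime $\alpha\sim N^{-1}$ rather than strictly subleading, so ``subleading'' slightly overstates the situation; but the final estimate is unaffected.
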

The proof of Proposition \ref{prop:p_comparison} follows as a standard consequence of Proposition \ref{prop:GFT}, see e.g. \cite[Theorem 15.3]{erdHos2017dynamical}. Thus, we further focus on proving Proposition \ref{prop:GFT}.

\begin{proof}[Proof of Proposition \ref{prop:GFT}]  We focus on the real symmetric case, as the proof in the complex Hermitian setting closely follows the same lines. For $1\le a\le b\le N$ denote the derivative in $w^{(j)}_{ab}$ by $\partial^{(j)}_{ab}$ and differentiate $\E R_t$ along the flow \eqref{eq:GFT_flow}. Using \eqref{eq:flow_entr} and \eqref{eq:def_S_mat} along with the It\^{o} calculus we get that
\begin{equation*}
\partial_t\E R_t = \frac{1}{2N}\E\sum_{a\le b} \sum_{j\in[2]} (1+\delta_{ab})\left(- \partial_{ab}^{(j)}R_t \left( s_{j1}^{ab}w_{t,ab}^{(1)} +s_{j2}^{ab}w_{t,ab}^{(2)}\right) + \big(\partial_{ab}^{(j)}\big)^2R_t\right),
\end{equation*}
Note that the variance of $s_{j1}^{ab}w_{t,ab}^{(1)} +s_{j2}^{ab}w_{t,ab}^{(2)}$ is much smaller than the variance of each of the two constituting terms separately. Arguing similarly to \eqref{eq:drift_naive} -- \eqref{eq:drift} we have that
\begin{equation}
\E \big| s_{j1}^{ab}w_{t,ab}^{(1)}\big|^2 \lesssim \frac{N}{\alpha^2},\,\, \E \big| s_{j2}^{ab}w_{t,ab}^{(2)}\big|^2 \lesssim \frac{N}{\alpha^2},\quad \text{while}\quad \E \big| s_{j1}^{ab}w_{t,ab}^{(1)} +s_{j2}^{ab}w_{t,ab}^{(2)}\big|^2 \lesssim \frac{N}{\alpha}.
\label{eq:addit_smallness}
\end{equation}
Next we perform the multivariate cumulant expansion \cite[Proposition 3.2]{slow_corr} for 
\begin{equation*}
\E \big[w_{t,ab}^{(1)}\partial_{ab}^{(j)}R_t\big]\quad \text{and}\quad \E \big[w_{t,ab}^{(2)}\partial_{ab}^{(j)}R_t\big].
\end{equation*}
Abbreviating the joint cumulant $\kappa_{p,q}\big(w_{t,ab}^{(1)}, w_{t,ab}^{(2)}\big)$ by $\kappa_{t,ab}^{(p,q)}$ for non-negative integers $p$ and $q$, we arrive to
\begin{equation}
\begin{split}
\partial_t \E R_t =& -\frac{1}{2N}\E\sum_{a\le b} \sum_{j\in[2]} (1+\delta_{ab}) \sum_{l=1}^L\sum_{p+q=l}\frac{1}{p!q!}\left(s_{j1}^{ab}\kappa_{t,ab}^{(p+1,q)}+s_{j2}^{ab}\kappa_{t,ab}^{(p,q+1)}\right) \big(\partial_{ab}^{(1)}\big)^p \big(\partial_{ab}^{(2)}\big)^q \partial_{ab}^{(j)} R_t\\
&  + \frac{1}{2N}\E\sum_{a\le b} \sum_{j\in[2]} (1+\delta_{ab})\big(\partial_{ab}^{(j)}\big)^2R_t + \mathcal{O}(N^{-100})
\end{split}
\label{eq:R_expanded}
\end{equation}
for some positive integer $L$ which depends only on the model parameters from Assumptions \ref{ass:W-type} and \ref{ass:model}. We refer to the terms corresponding to $l$ in \eqref{eq:R_expanded} as the $l+1$-st order terms, as they involve the $l+1$-st order cumulants.

The rest of the proof is structured as follows. First, we consider the second order terms ($l=1$) in \eqref{eq:R_expanded} and demonstrate an algebraic cancellation between them and the first term in the second line of \eqref{eq:R_expanded}. Next, we establish an upper bound on terms of the fourth order and higher ($l\ge 3$) by estimating each of them individually. Finally, we analyze the third order terms ($l=2$), employing a more refined argument that exploits the cancellations between the entries of $S^{ab}$ and the third order cumulants.  

\underline{Second order terms ($l=1$).} Fix a pair of indices $a,b\in [N]$ with $a\le b$. An elementary calculation based on \eqref{eq:flow_entr} shows that the second order cumulants are preserved under the flow \eqref{eq:GFT_flow}. Therefore, for $p+q=2$ we simply write $\kappa_{ab}^{(p,q)}:=\kappa_{t,ab}^{(p,q)}$. The coefficient by $\partial_{ab}^{(1)}\partial_{ab}^{(2)}R_t$ in the first line of \eqref{eq:R_expanded} equals to
\begin{equation}
s_{21}^{ab}\kappa_{ab}^{(2,0)} + s_{22}^{ab}\kappa_{ab}^{(1,1)} + s_{11}^{ab}\kappa_{ab}^{(1,1)}+s_{12}^{ab}\kappa_{ab}^{(0,2)}.
\label{eq:coeff_l=1}
\end{equation}
From \eqref{eq:def_S_mat} it follows that $s_{12}^{ab}=s_{21}^{ab}$ and
\begin{equation}
\kappa^{(2,0)}_{ab}=\left(\det S^{ab}\right)^{-1}s_{22}^{ab},\quad \kappa^{(0,2)}_{ab}=\left(\det S^{ab}\right)^{-1}s_{11}^{ab},\quad \kappa^{(1,1)}_{ab}=-\left(\det S^{ab}\right)^{-1}s_{12}^{ab},
\label{eq:kappa_S}
\end{equation}
implying that the quantity in \eqref{eq:coeff_l=1} equals to zero. A similar calculation using \eqref{eq:kappa_S} shows that the remaining second order terms in the first line of \eqref{eq:R_expanded} cancel out with the first term in the second line of \eqref{eq:R_expanded}.

\underline{High order terms ($l\ge 3$).} To simplify the presentation, from now on we assume that all $z_p^{(1)}$ and $z_q^{(2)}$ are identical, i.e. $z^{(1)}_p=z_1$ and $z^{(2)}_q=z_2$ for $p\in[m]$, $q\in[n]$. We introduce the short-hand notation $G_{j}:=G^{(j)}_t(z_j)$, which simplifies \eqref{eq:Rt} to $R_t= \langle \Im G_1\rangle^m\langle \Im G_2\rangle^n$. Recall that $\Im z^{(1)}=\Im z^{(2)}=N^{-1-\xi}$ for some fixed $\xi>0$. We claim that
\begin{equation}
\left\vert\big(\partial_{ab}^{(1)}\big)^p \big(\partial_{ab}^{(2)}\big)^q \partial_{ab}^{(j)} R_t\right\vert\lesssim N^{C\xi}
\label{eq:deriv_bound}
\end{equation} 
holds uniformly in $a,b\in[N]$ with very high probability, for some constant $C>0$ depending only on $p+q$. First we show how to get the desired upper bound on the high order terms in \eqref{eq:R_expanded} and then proceed by proving \eqref{eq:deriv_bound}.

Assumption \ref{ass:W-type}(iii) ensures that $S^{ab}$ defined in \eqref{eq:def_S_mat} admits the upper bound 
\begin{equation}
S^{ab}\lesssim N\alpha^{-1}.
\label{eq:S_bound}
\end{equation}
In particular, $\vert s^{ab}_{ij}\vert\lesssim N\alpha^{-1}$ for all $a,b\in [N]$ and $i,j\in[2]$.  Using this bound together with \eqref{eq:deriv_bound} and \eqref{eq:p_moment}, we get that
\begin{equation}
\begin{split}
&\frac{1}{2N}\left\vert\E\sum_{a\le b} \sum_{j\in[2]} (1+\delta_{ab})\sum_{p+q=l}\frac{1}{p!q!}\left(s_{j1}^{ab}\kappa_{t,ab}^{(p+1,q)}+s_{j2}^{ab}\kappa_{t,ab}^{(p,q+1)}\right) \big(\partial_{ab}^{(1)}\big)^p \big(\partial_{ab}^{(2)}\big)^q \partial_{ab}^{(j)} R_t\right\vert\\
&\quad\lesssim \frac{1}{N} \sum_{a\le b}N\alpha^{-1}N^{-(l+1)/2} N^{C\xi}\lesssim \alpha^{-1}N^{-(l-3)/2}N^{C\xi}\lesssim \sqrt{N/\alpha}\,N^{C\xi},
\end{split}
\label{eq:l_high}
\end{equation}
where in the last inequality we used that $l\ge 3$ and $\alpha>N^{-1}$.

In order to prove \eqref{eq:deriv_bound} it is sufficient to show that for any non-negative integer $k$ it holds that
\begin{equation}
\left\vert\big(\partial_{ab}^{(j)}\big)^k \langle G_j\rangle\right\vert \lesssim N^{C\xi}
\label{eq:1G_deriv}
\end{equation}
uniformly in $a,b\in [N]$. The key ingredient in the proof of \eqref{eq:1G_deriv} is the following bound on $G_j$ for $j=1,2$ in the regime below the spectral resolution. Using the notation $\eta_j:=\vert\Im z_j\vert$, we assert that
\begin{equation}
\vert \langle G_j(z_j))\rangle\vert \prec \frac{1}{N\eta_j}, \quad \vert (G(z_j))_{ab}\vert \prec \frac{1}{N\eta_j}, \quad \vert (G^2(z_j))_{ab}\vert \prec \frac{1}{N\eta^2_j}
\label{eq:G_bounds}
\end{equation}
uniformly in $\Re z_j\in \mathbf{B}_\kappa^{(j)}$, $\eta_j\in (0,N^{-1})$ and $a,b\in[N]$. Although $H^{(j)}_t$ depends on the time parameter $t$, the corresponding $\rho^{(j)}$ remains fixed along the flow \eqref{eq:GFT_flow}, as it depends only on the second moments of $H^{(j)}_t$, which are preserved under this flow. In particular, $\mathbf{B}_\kappa^{(j)}$ does not depend on time. Thus, if $\Re z_j$ lies in the $\kappa$-bulk of $H^{(j)}_0$, it remains in the $\kappa$-bulk of $H^{(j)}_t$ for all $t\ge 0$. The bounds in \eqref{eq:G_bounds} are a standard consequence of the local law from Proposition \ref{prop:loc_law} and the monotonicity properties of the resolvent (see e.g. \cite[Lemma 5.3]{cusp_univ}).

For $k=0$, \eqref{eq:1G_deriv} follows directly from the first bound in \eqref{eq:G_bounds}. Now, assuming that $k>0$, we rewrite the lhs. of \eqref{eq:1G_deriv} as
\begin{equation*}
\left\vert\big(\partial_{ab}^{(j)}\big)^k \langle G_j\rangle\right\vert \sim \left\vert \left\langle G_j(\Phi^{(j)}[E_{ab}]G_j)^{k-1}\right\rangle\right\vert \le \frac{1}{N}\sum_{\bm{a},\bm{b}}\bigg|(G_j^2)_{b_{k-1}a_1}\prod_{i=1}^{k-1} (\Phi^{(j)}[E_{ab}])_{a_ib_i}\prod_{i=1}^{k-2}(G_j)_{b_ia_{i+1}}\bigg|,
\end{equation*}
where the summation runs over the sets of indices $\bm{a}=\{a_i\}_{i=1}^{k-1}\in [N]^k$ and $\bm{b}=\{b_i\}_{i=1}^{k-1}\in [N]^k$. Applying \eqref{eq:G_bounds} to bound the entries of $G_j$ and $G_j^2$, we get that
\begin{equation}
\left\vert\big(\partial_{ab}^{(j)}\big)^k \langle G_j\rangle\right\vert \lesssim  N^{C\xi}\bigg(\sum_{a',b'\in[N]} \big|(\Phi^{(j)}[E_{ab}])_{a'b'}\big|\bigg)^{k-1},
\end{equation}
which, together with the decay condition on the off-diagonal entries of  $\Phi^{(j)}[E_{ab}]$ stated in Assumption~\ref{ass:model}, immediately implies \eqref{eq:1G_deriv}.

\underline{Third order terms ($l=2$).} Since $w^{(1)}_{t,ab}$ and $w^{(2)}_{t,ab}$ have mean zero, their joint third order cumulants coincide with the corresponding joint moments. Thus, for any non-negative integers $p$ and $q$ with $p+q=2$ we have
\begin{equation*}
s_{j1}^{ab}\kappa_{t,ab}^{(p+1,q)}+s_{j2}^{ab}\kappa_{t,ab}^{(p,q+1)}\!\! = \E \left[\big(w_{ab}^{(1)}\big)^p \!\big(w_{ab}^{(1)}\big)^q\!\! \left(s_{j1}^{ab}w_{t,ab}^{(1)}+s_{j2}^{ab}w_{t,ab}^{(2)}\right)\right]\! = \!\E \left[\big(w_{t,ab}^{(1)}\big)^p \!\big(w_{t,ab}^{(1)}\big)^q \bm{e}_j^*S^{ab}\bm{w}_{t,ab}\right],
\end{equation*}
where $\{\bm{e}_1,\bm{e}_2\}$ is the canonical basis in $\C^2$ and $\bm{w}_{t,ab}=w^{(1)}_{t,ab}\bm{e}_1+w^{(2)}_{t,ab}\bm{e}_2$. Therefore,
\begin{equation}
\begin{split}
\left\vert s_{j1}^{ab}\kappa_{t,ab}^{(p+1,q)}+s_{j2}^{ab}\kappa_{t,ab}^{(p,q+1)}\right\vert \lesssim &\left(\E \,\big(w_{t,ab}^{(1)}\big)^{2p} \big(w_{t,ab}^{(1)}\big)^{2q}\right)^{1/2}\left(\E \bm{e}_j^*S^{ab}\bm{w}_{t,ab} \bm{w}_{t,ab}^* S^{ab}\bm{e}_j\right)^{1/2}\\
\lesssim &N^{-(p+q)/2} \left(\bm{e}_j^*S^{ab}\bm{e}_j\right)^{1/2}\lesssim N^{-1}\sqrt{N/\alpha}.
\end{split}
\label{eq:l=2_cancel}
\end{equation}
Here to go from the first to the second line we used that the covariance matrix of $\bm{w}_{t,ab}$ is given by $\big(S^{ab}\big)^{-1}$, and in the last inequality we also applied \eqref{eq:S_bound}. Individually $ s_{j1}^{ab}\kappa_{t,ab}^{(p+1,q)}$ and $s_{j2}^{ab}\kappa_{t,ab}^{(p,q+1)}$ have the size of order $N^{-1/2}\alpha^{-1}$, which is larger than their sum, as \eqref{eq:l=2_cancel} shows. This cancellation reflects the smallness of $s_{j1}^{ab}w_{t,ab}^{(1)} +s_{j2}^{ab}w_{t,ab}^{(2)}$ discussed in \eqref{eq:addit_smallness}. Combining \eqref{eq:l=2_cancel} with \eqref{eq:deriv_bound} we conclude that the lhs. of \eqref{eq:l_high} for $l=2$ has an upper bound of order $\sqrt{N/\alpha}\, N^{C\xi}$.

Having analyzed all terms in the rhs. of \eqref{eq:R_expanded}, we obtain
\begin{equation*}
\vert\partial_t \E R_t\vert\lesssim \sqrt{N/\alpha}\, N^{C\xi},
\end{equation*}
thereby completing the proof of Proposition \ref{prop:GFT}.
\end{proof}

\appendix

\section{Proofs of additional technical results}

\subsection{Optimality of Assumption \ref{ass:W-type}(iii): justification of Example \ref{ex:optimal}}\label{app:optimal} We embed $W^{(1)}$ and $W^{(2)}$ into the following diffusion processes with independent diffusion terms:
\begin{equation}
\dif W^{(j)}_t = N^{-1/2}\dif B^{(j)}_t,\quad W^{(1)}_0=W^{(2)}_0=W,
\label{eq:ex_flow}
\end{equation}
where $\dif B^{(1)}_t$ and $\dif B^{(2)}_t$ are defined as below \eqref{eq:GFT_flow}. At time $t=\alpha$, the matrices $W^{(1)}_t$ and $W^{(2)}_t$ have the same joint distribution as $W^{(1)}$ and $W^{(2)}$ defined in \eqref{eq:ex_W}. Analogously to \eqref{eq:Ht}, we define the time-dependent version of $H^{(j)}$ as $H_t^{(j)}:=A+W^{(j)}_t$, $j=1,2$. 

We now proceed as in Section \ref{sec:GFT}. To verify \eqref{eq:optimal}, it suffices to check that
\begin{equation}
\vert \E R_\alpha - \E R_0\vert \le N^{-c}
\label{eq:ex_R_difference}
\end{equation}
for some constant $c>0$, where $R_t$ is defined as in \eqref{eq:Rt}. Although proving \eqref{eq:optimal} only requires considering $m,n\in\{0,1,2\}$, our argument holds for any non-negative integers $m$ and $n$. Focusing on the real symmetric case for definiteness, we compute the derivative of $\E R_t$ along the flow \eqref{eq:ex_flow}:
\begin{equation}
\partial_t \E R_t = \frac{1}{2N}\E\sum_{a\le b}\sum_{j\in[2]} (1+\delta_{ab}) \big(\partial_{ab}^{(j)}\big)^2 R_t,
\label{eq:ex_deriv}
\end{equation}
where $\partial_{ab}^{(j)}$ denotes the derivative with respect to $w^{(j)}_{ab}$. Using \eqref{eq:ex_deriv} together with \eqref{eq:deriv_bound} in the special case where $\Phi^{(1)}=\Phi^{(2)}$ act identically on $\mathrm{Sym}_\beta(N)$, we obtain
\begin{equation*}
\vert\partial_t \E R_t\vert\lesssim N^{1+C\xi}.
\end{equation*}
Since $\alpha\le N^{-1-\epsilon}$ and $\xi>0$ can be chosen arbitrarily small, this completes the proof of \eqref{eq:ex_R_difference} and thus the proof of \eqref{eq:optimal}.\qed

\subsection{Local law: proof of Proposition \ref{prop:loc_law} }\label{app:loc_law} The local law stated in Proposition \ref{prop:loc_law} follows immediately from \cite[Theorem 2.8]{cusp_univ}, which establishes the local law for a broad class of correlated random matrix. To complete the proof, it remains to verify that the assumptions of \cite[Theorem 2.8]{cusp_univ} hold for our model \eqref{eq:model}, which we now proceed to do.

Assumption 2.1 from \cite{cusp_univ} coincides with Assumption \ref{ass:W-type}(i), while \cite[Assumption 2.2]{cusp_univ} is an analogue of \eqref{eq:p_moment}, but stated for the entries of $\widetilde{W}^{(j)}=\big(\widetilde{w}^{(j)}_{ab}\big)_{a,b=1}^N$, $j=1,2$. To verify this bound, for any $p\in\N$ and $a,b\in[N]$ we compute that
\begin{equation*}
\begin{split}
\E \big| \widetilde{w}^{(j)}_{ab}\big|^p & = \E\bigg| \sum_{i\le k}\left( \big(\Phi^{(j)}[E_{ik}]\big)_{ab} \Re w^{(j)}_{ik} + \big(\Phi^{(j)}[E_{ik}^\Im]\big)_{ab} \Im w^{(j)}_{ik} \right)\bigg|^p\\
&\lesssim N^{-p/2} \bigg(\sum_{i\le k}\big| \big(\Phi^{(j)}[E_{ik}]\big)_{ab}\big| + \big| \big(\Phi^{(j)}[E_{ik}^\Im]\big)_{ab}\big|\bigg)^p\lesssim N^{-p/2},
\end{split}
\end{equation*}
where the implicit constants depend only on $p$. The transition from the first to the second line follows from \eqref{eq:p_moment}, while in the second line we additionally use \eqref{eq:Phi_decay} and its complex Hermitian counterpart. Furthermore, the fullness condition from \cite[Assumption 2.4]{cusp_univ} follows directly from the inequality $\Phi^{(j)}\ge c_0$, and the upper bound on $\|M^{(j)}\|$ from \cite[Assumption 2.5]{cusp_univ} holds as we are working in the bulk regime. 

Now we turn to Assumption 2.3 from \cite{cusp_univ}. For clarity, we present the argument in the real symmetric case, the proof in complex Hermitian case follows analogously. We verify the first part of this assumption in the simplified form formulated in \cite[Example 2.6]{cusp_univ}, namely by showing that the joint cumulants of the entries of $\widetilde{W}^{(j)}$ exhibit a tree-like decay. Let $k\ge 2$ be a positive integer, and consider the index pairs $\alpha_i=(a_i,b_i)\in [N]^2$ for $i\in [k]$, where $a_i\le b_i$. Since the entries of $W^{(j)}$ are independent up to the symmetry constraint, we obtain
\begin{equation}
\kappa(\sqrt{N}w_{\alpha_1},\ldots,\sqrt{N}w_{\alpha_k}) =\sum_\beta \kappa_k\big(\sqrt{N}w^{(j)}_\beta\big)\prod_{i=1}^k \big( \Phi^{(j)}[E_\beta]\big)_{\alpha_i},
\end{equation}
where the summation runs over all $\beta=(a,b)\in [N]^2$ with $a\le b$. Applying \eqref{eq:p_moment} to bound the cumulants of $w^{(j)}_\beta$ and combining this with \eqref{eq:Phi_decay}, we derive
\begin{equation*}
\vert \kappa(\sqrt{N}w_{\alpha_1},\ldots,\sqrt{N}w_{\alpha_k})\vert \lesssim \prod_{e\in\mathfrak{T}_{\mathrm{min}}}\frac{1}{1+d(e)^s},
\end{equation*} 
where $\mathfrak{T}_{\mathrm{min}}$ is a minimal spanning tree as defined in \cite[Example 2.6]{cusp_univ} Here for an edge $e=(\alpha_i,\alpha_l)$ we denoted $d(e):=|a_i-a_l|+|b_i-b_l|$. This completes the verification of \cite[Assumption 2.3(i)]{cusp_univ}.

Finally, we demonstrate that the second part of \cite[Assumption 2.3]{cusp_univ} is not needed in our setting. This assumption states that each entry of $\widetilde{W}^{(j)}$ for $j=1,2$ is correlated with at most $N^{1/2-\mu}$ other entries for some fixed $\mu>0$. However, this condition does not necessarily hold in our case, as we allow each entry to be correlated with all others, provided that the correlations decay sufficiently fast, as specified in \eqref{eq:Phi_decay}. In \cite{cusp_univ}, Assumption 2.3(ii) was introduced solely to justify the truncation of the multivariate cumulant expansion in \cite[Proposition 5.2]{cusp_univ}. However, instead of expanding in terms of the entries of $\widetilde{W}^{(j)}$, we carry out the cumulant expansion directly with respect to the entries of $W^{(j)}$, $j=1,2$. This approach allows us to truncate the resulting series easily, since only a few entries of $W^{(1)},W^{(2)}$ exhibit correlations. Crucially, when rewriting the series in terms of $\widetilde{W}^{(j)}$, we recover the cumulant expansion with respect to $\widetilde{W}^{(j)}$. Thus, we conclude the proof of Proposition \ref{prop:loc_law}.\qed

\subsection{Properties of the GFT flow: proof of Lemma \ref{lemma:match}}\label{app:flow} The first statement of Lemma \ref{lemma:match} follows by a straightforward It\^{o} calculus. Next, we consider the real symmetric case and base our proof of Lemma \ref{lemma:match}(2) on \eqref{eq:flow_entr}. The complex Hermitian case follows similarly, requiring only a minor adjustment: using the the analogues of \eqref{eq:flow_entr} for the real and imaginary parts of $w^{(1)}_{ab}$ and $w^{(2)}_{ab}$. This is the only necessary modification, so we omit further details.

We construct the representation \eqref{eq:g_component} independently for each pair of indices $a,b\in [N]$, where $a\le b$. For simplicity, we further assume that $a<b$ to avoid introducing the scaling factor $1+\delta_{ab}$ in \eqref{eq:flow_entr}, the case $a=b$ follows identically. Let $O^{ab}$ be a $2\times 2$ orthogonal matrix that diagonalizes $S^{ab}$, i.e. $S^{ab} = O^{ab}\Lambda^{ab}(O^{ab})^*$, where $\Lambda^{ab}$ is a $2\times 2$ diagonal matrix with diagonal entries $\Lambda^{ab}_{11}$ and $\Lambda^{ab}_{22}$. Following the discussion above \eqref{eq:l=2_cancel}, we define $\bm{w}_{t,ab}:=\big(w_{t,ab}^{(1)},w_{t,ab}^{(2)}\big)$. Although $\bm{w}_{t,ab}$ is written as a row for typographical convenience, we treat it as a column. This convention is maintained throughout this section. Additionally, we denote 
\begin{equation*}
\bm{\xi}_{t,ab}:=(O^{ab})^* \bm{w}_{t,ab} = \big(\xi_{t,ab}^{(1)},\xi_{t,ab}^{(2)}\big)\quad \text{and}\quad \bm{B}_{t,ab}:=\big(B^{(1)}_{t,ab},B^{(2)}_{t,ab}\big).
\end{equation*}
Applying the orthogonal transformation $(O^{ab})^*$ from the left to \eqref{eq:flow_entr}, we obtain
\begin{equation}
\dif \bm{\xi}_{t,ab} = -(2N)^{-1}\Lambda^{ab} \bm{\xi}_{t,ab} \dif t +N^{-1/2}(O^{ab})^* \dif \bm{B}_{t,ab}.
\label{eq:xi_flow}
\end{equation}
Since the distribution of standard 2-dimensional Brownian motion remains unchanged under orthogonal transformations, \eqref{eq:xi_flow} decomposes into two flows with independent diffusion terms. Consequently, for any $t\ge 0$ we have that
\begin{equation}
\xi^{(j)}_{t,ab}\stackrel{d}{=} \mathrm{exp}\{ -(2N)^{-1}\Lambda^{ab}_{jj}t\}\xi^{(j)}_{0,ab}+ \sigma^{(j)}_{t,ab}\xi_{G,ab}^{(j)}\quad \text{with}\quad \sigma^{(j)}_{t,ab}=\sqrt{(1-\mathrm{exp}\{-N^{-1}\Lambda_{jj}^{ab}t\})/\Lambda^{ab}_{jj}},
\label{eq:xi_t}
\end{equation}
where the standard Gaussians $\xi^{(1)}_{G,ab}$ and $\xi^{(2)}_{G,ab}$ are independent and also independent of the initial condition $\bm{\xi}_{0,ab}$. Recalling \eqref{eq:S_bound}, we conclude that there exists a constant $c_*$ depending only on the model parameters in Assumptions \ref{ass:W-type} and \ref{ass:model}, such that
\begin{equation}
c_*t\le N\big(\sigma_{t,ab}^{(j)}\big)^2\le c_*^{-1}t
\end{equation}
for any $t\in [0,\alpha]$, $a,b\in [N]$ and $j=1,2$. By splitting the second term in \eqref{eq:xi_t} into two independent Gaussian components, we obtain the representation
\begin{equation}
\xi^{(j)}_{t,ab}=\widehat{\xi}^{(j)}_{ab}+\sqrt{s}N^{-1/2}\widehat{\xi}_{G,ab}^{(j)},\quad j=1,2,
\label{eq:xi_rep}
\end{equation}
for some $s\in [c_*t,c_*^{-1}t]$ which is the same for all indices $a$ and $b$. Here the independent standard Gaussians $\widehat{\xi}_{G,ab}^{(j)}$, $j=1,2$, are also independent of $\widehat{\xi}^{(1)}_{ab}$ and $\widehat{\xi}^{(2)}_{ab}$, which satisfy moment conditions \eqref{eq:init_flat} and \eqref{eq:p_moment}. Moreover, since $S^{ab}$ is an inverse of the covariance matrix of $w^{(j)}_{0,ab}$, $j=1,2$, we have $\E \xi^{(1)}_{0,ab}\xi^{(2)}_{0,ab}=0$ and this property is inherited by $\widehat{\xi}^{(j)}_{ab}$. Therefore, applying the orthogonal transformation $O^{ab}$ to $\bm{\xi}_{t,ab}$ expressed in the form \eqref{eq:xi_rep}, yields the desired representation \eqref{eq:g_component} of $W_t^{(j)}$.\qed

\subsection{Proof of Lemma \ref{lem:cond_rho}}\label{app:free_conv} The first statement of Lemma \ref{lem:cond_rho} follows directly from \cite[Proposition 10.1(b)]{shape}. The second statement follows by a slight modification of the proof of Proposition \ref{prop:loc_law} presented in Appendix \ref{app:loc_law}. In that proof we already verified that the correlation structure of $\Phi^{(j)}\big[ \widehat{W}^{(j)}\big]$ satisfies the assumptions of \cite[Theorem 2.8]{cusp_univ}. Additionally, we have a polynomial decay of the correlations in
\begin{equation*}
\left(\big(\Phi^{(j)}\big)^2-(c_0/2)^2I\right)^{1/2}\big[ W^{(j)}_{1,G}\big].
\end{equation*} 
For the second order cumulants, this is ensured by Assumption \ref{ass:model}, while higher order cumulants vanish since $W^{(j)}_{1,G}$ is Gaussian. Combined with the independence of $\widehat{W}^{(j)}$ and $ W^{(j)}_{1,G}$, this shows that $\widehat{H}^{(j)}$ satisfies the assumptions of \cite[Theorem 2.8]{cusp_univ}. Therefore, the local law \eqref{eq:loc_law} holds for $\widehat{H}^{(j)}$.

Now we proceed with the proof of \eqref{eq:cond_rho}. Since all quantities below depend on $j\in [2]$, we fix $j$ and omit it from the notation for brevity. First, we introduce the short-hand notation
\begin{equation*}
\widehat{m}_N(z):=\big\langle (\widehat{H}-z)^{-1}\big\rangle,\quad z\in\C\setminus\R.
\end{equation*}
Let $\widehat{M}$ and $M_c$ be the solutions to the MDE \eqref{eq:MDE} corresponding to $\widehat{H}$ and $\widehat{H}+\sqrt{t}W_G$ (conditioned on $\widehat{H}$, i.e. with $A=\widehat{H}$ in \eqref{eq:MDE}), respectively. We set
\begin{equation*}
\widehat{m}(z):=\big\langle \widehat{M}(z)\big\rangle \quad\text{and}\quad m_c(z):=\big\langle M_c(z)\big\rangle.
\end{equation*}
Since the bulk local law holds for $\widehat{H}$, \cite[Lemma A.1]{fixed_E} implies that there exist $N$-independent constants $c,C>0$ such that  $c<\Im m_c(z)<C$ for any $z\in\C\setminus\R$ with $\Re z$ in the $\kappa/3$-bulk of $\widehat{\rho}$ and $\vert\Im z\vert\in [0,1]$. It is known from \cite{Biane} that $m_c$ and $\widehat{m}_N$ are related as follows
\begin{equation}
m_c(z) = \widehat{m}_N (z+t m_c(z)),\quad z\in \C\setminus\R.
\label{eq:fc_def}
\end{equation}
Take $z_0:=E+\ii 0$ in \eqref{eq:fc_def}, where $E$ lies in the $\kappa$-bulk of $\rho$. Then $E$ is also in the $\kappa/2$-bulk of $\widehat{\rho}$ by part~(1), so 
\begin{equation*}
\Im (z_0+tm_c(z_0))=t\Im m_c(z_0)\sim t\ge N^{-1+\sigma}\quad \text{and}\quad \Re (z_0+tm_c(z_0)) = E+\mathcal{O}(t)\in \widehat{\mathbf{B}}_{\kappa/3}.
\end{equation*}
Therefore, using \eqref{eq:fc_def} and applying the local law from Lemma \ref{lem:cond_rho}(2) to $\widehat{m}_N(z_0+tm_c(z_0))$, we obtain
\begin{equation}
m_c(z_0) = \widehat{m}(z_0+t m_c(z_0)) + \mathcal{O}_\prec\left((Nt)^{-1}\right) = \widehat{m}(z_0) + \mathcal{O}_\prec\left(t m_c(z_0)+ (Nt)^{-1}\right),
\label{eq:m_c_comp}
\end{equation}
where in the last step we used that by \cite[Proposition 4.7, (4.20)]{shape} the derivative of $\widehat{m}(z)$ has an upper bound of order 1 for $\Re z$ in the bulk of $\widehat{\rho}$. Rewriting \eqref{eq:m_c_comp} as
\begin{equation*}
m_c(z_0) (1-t\mathcal{O}_\prec(1)) = \widehat{m}(z_0) + \mathcal{O}_\prec\left((Nt)^{-1}\right),
\end{equation*}
and using the bound $\vert \widehat{m}(z_0)\vert\lesssim 1$ from \cite[Proposition 3.5, (3.12)]{shape}, we complete the proof of Lemma~\ref{lem:cond_rho}.\qed

\bibliographystyle{plain} 
\bibliography{refs}

\end{document}